\numberwithin{equation}{section}
\newtheorem{theorem}{Theorem}[section]
\newtheorem{prop}{Proposition}
\newtheorem{lemma}[theorem]{{\bf Lemma}}
\newtheorem{coro}[theorem]{{\bf Corollary}}
\newtheorem{definition}[theorem]{Definition}
\newtheorem{remark}[subsection]{Remark}
\newtheorem{example}{Example}
\begin{document}

	\title[ New Congruences on Biregular Overpartitions]  
	  {New Congruences on Biregular Overpartitions} 
	  
	  	\author[N.K. Meher]{N.K. Meher}
	  \address{Nabin Kumar Meher, Department of Mathematics, National Institute of Technology, Raipur, Chhattisgarh 492010.}
	  \email{mehernabin@gmail.com, nkmeher.maths@nitrr.ac.in}
	
	\author[ Sourav Bhowmick]{S. Bhowmick}
	\address{Sourav Bhowmick, Department of Mathematics, National Institute of Technology, Raipur, Chhattisgarh 492010.}
	\email{souravbhowmick578@gmail.com, sbhowmick.phd2025.maths@nitrr.ac.in}

	\thanks{2010 Mathematics Subject Classification: Primary 05A17, 11P81, Secondary 11F11 \\
		Keywords: $\ell_1,\ell_2$-Biregular Overpartitions, Eta-quotients, Congruence, Hecke eigenform, Newman Identity. \\}
	\maketitle
	\pagenumbering{arabic}
	\pagestyle{headings}
   \begin{abstract}
   Recently, Nadji, Ahmia and Ram\'{i}rez \cite{Nadji2025} investigated the arithmetic properties of ${\bar B}_{\ell_1,\ell_2}(n)$, the number of overpartitions where no part is divisible by $\ell_1$ or $\ell_2$ with $\gcd(\ell_1,\ell_2)$$=1$ and $\ell_1$, $\ell_2>1$. Specifically, they established congruences modulo $3$ and powers of $2$ for the pairs $(\ell_1, \ell_2)$ $\in$ $\{(4,3),(4,9),(8,3),(8,9)\}$, using the concept of generating functions, dissection formulas and Smoot's implementation of Radu's Ramanujan-Kolberg algorithm. Further,  Alanazi, Munagi and Saikia \cite{Alanazi2024} established some congruences for the pairs $(\ell_1,\ell_2)$ $\in$	$\{(2,3),(4,3),(2,5),(3,5),(4,9),(8,27),(16,81)\}$  using the theory of modular forms and  Radu's algorithm. Recently, Paudel, Sellers and Wang \cite{Paudel2025} extended several of their results and established infinitely many families of new congruences. In this paper, we find infinitely many families of  congruences modulo $3$ and powers of $2$ for the pairs $(\ell_1,\ell_2)$ $\in$ $ \{(5,2^t), (4,3^t)\}$ $\forall t\geq1$ with $t\in\mathbb{N}$  and for $(3,2^t)$ $\forall t\geq2 $ with  $t\in\mathbb{N}$,
     using the theory of Hecke eigenforms, an identity due to Newman \cite{Newman1959}, the concept of dissection formulas. 
  \end{abstract}

	\maketitle
	
	\section{Introduction}
	Let $n$ be a positive integer. A partition of $n$ is a non-increasing sequence of positive integers $\lambda_1$$\geq$$\lambda_2$$\geq$$\cdots$$\geq$$\lambda_ k$ such that $\sum_{i=1}^{k}$$\lambda_i$$=n $. Each integer $\lambda_i$ is referred as a part of the partition. We denote the  number of partitions of $n$ by $p(n)$ and by convention, we write $p(0)=1$.
	
	An overpartition of $n$ is a partition of $n$ in which the first occurrence of each part may be overlined. We denote the number of  overpartitions of $n$ by $\bar{p}(n)$ and by convention, we write  $\bar{p}(0)=1$. For example, there are $24$ overpartitions for $n=5$, that is $\bar{p}(5)$$=24$. 
	First of all, the study of overpartitions were introduced by MacMahon \cite{MacMahon1960} and later studied by Corteel and Lovejoy \cite{Lovejoy2004}. In order to introduce the generating function for overpartitions, we recall the $q$-Pochhammer symbol $(a;q)_{\infty}:= \prod_{i=0}^{\infty}(1-aq^i)$ and we will use the notation $f_m$$:=(q^m;q^m)_{\infty}$.
	
	Corteel and Lovejoy \cite{Lovejoy2004} established the generating function for $\bar{p}(n)$, which is given by 
	\begin{align*}
		 \sum_{n\geq0}\bar{p}(n)q^n=\frac{f_2}{f^2_1}.
	\end{align*}
	To know more about the arithmetic properties of $\bar{p}(n)$, we refer the reader to read \cite{Hirschhorn2005}, \cite{Mahlburg2004}, \cite{wang2017} and the references therein.
	
	An overpartition is called $\ell$-regular if none of its parts is divisible by $\ell$. We denote the number of  $\ell$-regular overpartitions of $n$ by $\bar{A}_{\ell}(n)$ and it has the generating function 
		\begin{align*}
	        	\sum_{n\geq0}\bar{A}_{\ell}(n)q^n=\frac{f_2 f^2_{\ell}}{f^2_1 f_{2\ell}}.
	        \end{align*}   
	 The arithmetic properties of $\bar{A}_{\ell}(n)$ can be found in \cite{Barman2018}, \cite{Ray2018} and \cite{Shen2016}.

	Let $\ell_1,\ell_2>1$ be two coprime positive integers. 
	A ($\ell_1$, $\ell_2$)-biregular overpartition of $n$ is an overpartition of $n$ in which none of the parts are divisible by $\ell_1$ or $\ell_2$. Following the notation in \cite{Nadji2025}, the number of ($\ell_1$, $\ell_2$)-biregular overpartitions of $n$ is denoted by ${\bar B}_{\ell_1,\ell_2}(n)$.
	For example, ${\bar B}_{5,2}(13)=52$, listed below
	\vspace{-15pt}
	\begin{table}[h]
		\centering
		\caption{$(5,2)$-biregular overpartitions of $13$}
		\label{table1}
		
		\renewcommand{\arraystretch}{1.4}
		\begin{tabular}{|p{0.95\textwidth}|}
			\hline

			$13,\ \overline{13}$ \\ \hline

			$11+1+1,\ \overline{11}+1+1,\ 11+\overline{1}+1,\ \overline{11}+\overline{1}+1$ \\ \hline

			$9+3+1,\ \overline{9}+3+1,\ 9+\overline{3}+1,\ 9+3+\overline{1},$
			$\overline{9}+\overline{3}+1,\ \overline{9}+3+\overline{1},\ 9+\overline{3}+\overline{1},\ \overline{9}+\overline{3}+\overline{1}$ \\ \hline

			$9+1^4,\ \overline{9}+1^4,\ 9+\overline{1}+1^3,\ \overline{9}+\overline{1}+1^3$ \\ \hline

			$7+3+3,\ \overline{7}+3+3,\ 7+\overline{3}+3,\ \overline{7}+\overline{3}+3$ \\ \hline

			$7+3+1^3,\ \overline{7}+3+1^3,\ 7+\overline{3}+1^3,\ 7+3+\overline{1}+1^2,$
			$\overline{7}+\overline{3}+1^3,\ \overline{7}+3+\overline{1}+1^2,\ 7+\overline{3}+\overline{1}+1^2,\ \overline{7}+\overline{3}+\overline{1}+1^2$ \\ \hline

			$7+1^6,\ \overline{7}+1^6,\ 7+\overline{1}+1^5,\ \overline{7}+\overline{1}+1^5$ \\ \hline

			$3^4+1,\ \overline{3}+3^3+1,\ 3^4+\overline{1},\ \overline{3}+3^3+\overline{1}$ \\ \hline

			$3^3+1^4,\ \overline{3}+3^2+1^4,\ 3^3+\overline{1}+1^3,\ \overline{3}+3^2+\overline{1}+1^3$ \\ \hline

			$3^2+1^7,\ \overline{3}+3+1^7,\ 3^2+\overline{1}+1^6,\ \overline{3}+3+\overline{1}+1^6$ \\ \hline

			$3+1^{10},\ \overline{3}+1^{10},\ 3+\overline{1}+1^9,\ \overline{3}+\overline{1}+1^9$ \\ \hline

			$1^{13},\ \overline{1}+1^{12}$ \\ \hline
			
		\end{tabular}
	\end{table}
	
	 The generating function for the sequence ${\bar B}_{\ell_1,\ell_2}(n)$ is given by
	\begin{equation}\label{def1}
		\sum_{n\geq0}\bar{B}_{\ell_1,\ell_2}(n)q^n
		= \frac{f_2 f^2_{\ell_1} f^2_{\ell_2} f_{2 \ell_1 \ell_2}}{f_1^2 f_{2 \ell_1} f_{2 \ell_2} f^2_{\ell_1\ell_2}}.
	\end{equation}
	 In their work \cite{Nadji2025}, Nadji, Ahmia and Ram\'{i}rez investigated the arithmetic properties of ${\bar B}_{\ell_1,\ell_2}(n)$ for the pairs  $(\ell_1,\ell_2)$$\in$ $\{(4,3),(4,9),(8,3),(8,9)\}$ by employing the concept of dissection formulas and Smoot's implementation of Radu's Ramanujan-Kolberg algorithm. In \cite{Nadji2025}, they showed that
	 \begin{equation}\label{res1}
	 	\sum_{n\geq0}\bar{B}_{4,3}(12n+1)q^n\equiv2f_1^2 \pmod{4},
	 \end{equation}
	 \begin{equation}\label{res2}
	 	\sum_{n\geq0}\bar{B}_{4,9}(12n+1)q^n\equiv2f_1^2 \pmod{8}.
	 \end{equation}
	 
	Subsequently, Alanazi, Munagi and Saikia \cite{Alanazi2024} established a bunch of congruences for the pairs
	 \begin{align*}
	 	(\ell_1,\ell_2)\in\{(2,3),(4,3),(2,5),(3,5),(4,9),(8,27),(16,81)\} 
	 \end{align*} using the theory of modular forms and Radu's algorithm.
	
	Recently, Paudel, Sellers and Wang \cite{Paudel2025} extended several results of \cite{Alanazi2024} for the pairs 
	\begin{align*}
		(\ell_1, \ell_2)\in\{(2,3),(4,3),(4,9)\}
	\end{align*} completely depending on classical $q$-series manipulations and dissections formulas. In \cite{Paudel2025}, the authors have proved that, 
	for all integers $n\geq1$, 
	\begin{equation}\label{res3}
		\bar{B}_{4,9}(3n)\equiv0 \pmod{8}.
	\end{equation}
	Most recently, in \cite{Anakha2025}, several congruences modulo $\{4,6,8,12\}$ were established using generating functions and dissection formulas, where $\ell_1$ and $\ell_2$ are arbitrary powers of $2$ and $3$, respectively.
	
	In this paper, we contribute to this line of research by proving infinitely many families of  congruences modulo $3$ and powers of $2$ for the pairs $(\ell_1,\ell_2)$$\in$ $ \{(5,2^t), (4,3^t)\}$ $\forall t\geq1$ with $t\in\mathbb{N}$  and for $(3,2^t)$ $\forall t\geq2 $ with  $t\in\mathbb{N}$, using the theory of Hecke eigenforms, an identity due to Newman \cite{Newman1959}, and the concept of dissection formulas.
	 
		The main results of this paper can be stated as follows.

	We deduce the following infinite families of congruence for $\bar{B}_{5,2^t} (n)$ $\forall t\geq1$ and $t\in\mathbb{N}$, using the theory of Hecke eigenforms.
	\begin{theorem}\label{thm4.8}
		Let $k$ and $n$ be non-negative integers and  $t\geq1$ be an integer. For each $1 \leq i \leq k+1,$ let $p_1, p_2,\ldots,p_{k+1}$ be primes such that $p_i \not \equiv 1 \pmod4$. Then for any integer $j \not \equiv 0 \pmod {p_{k+1}},$ we have
		$$\bar{B}_{5,2^{t}} \left( 4p_1^2 p_2^2 \cdots p_k^2 p_{k+1}^2 n + (4j+p_{k+1}) p_1^2 p_2^2 \cdots p_k^2 p_{k+1}  \right) \equiv 0 \pmod4 .$$
	\end{theorem}
	 If we put $p_1= p_2= \cdots= p_{k+1}= p$ in Theorem \ref{thm4.8}, then we obtain the following corollary. 
	\begin{coro}\label{coro4.9}
		Let $k$ and $n$ be non-negative integers and $t\geq1$ be an integer. Let $p \geq 5$ be a prime such that $p \equiv 3 \pmod4.$ Then we have
		$$\bar{B}_{5,2^{t}}\left(4p^{2k+2}n +4p^{2k+1}j +p^{2k+2} \right) \equiv 0 \pmod 4,$$ whenever $ j \not \equiv 0 \pmod p.$
	\end{coro}
	Further, if we substitute $p=7,$ $j\not \equiv 0 \pmod 7$ and $k=0$ in Corollary \ref{coro4.9}, we get
	$$\bar{B}_{5,2^{t}}\left(196n +28j+49 \right) \equiv 0 \pmod 4.$$ If we put $j=1$ in the above congruence, then we find $$\bar{B}_{5,2^t}\left(196n+77 \right) \equiv 0 \pmod 4.$$
	Furthermore, we prove the following multiplicative formulae for $\bar{B}_{5,2^{t}}(n)$  modulo $4$ for all $t\geq1$  and $t\in\mathbb{N}$.
	\begin{theorem}\label{thm4.10}
		Let $k$ be a positive integer and $t\geq1$ be an integer and $p$ be a prime number such that $ p \equiv 3 \pmod 4.$ Let $r$ be a non-negative integer such that $p$ divides $4r+3,$ then
		\begin{align*}
			\bar{B}_{5,2^{t}}\left(4p^{k+1}n+4pr+ 3p \right)\equiv h_1(p) \bar{B}_{5,2^t}\left(4p^{k-1}n+ \frac{4r+3}{p} \right) \pmod4,
		\end{align*}
		where $h_1(p)$ is define by \begin{align*}
			h_1(p)= \begin{cases}
				-1 & \hbox{if} \quad  p\equiv3,7\pmod{20}; \\
				1 & \hbox{if} \quad  p\equiv11,19\pmod{20}.
			\end{cases}
		\end{align*}
	\end{theorem}
	 \begin{coro}\label{coro4.11} 
		Let $k$ be a positive integer and $t\geq1$ be an integer and $p$ be a prime number such that $p \equiv 3\pmod 4.$ Then
		\begin{align*}
			\bar{B}_{5,2^{t}}\left(p^{2k}(4n+1) \right) \equiv h_1(p)^k  \bar{B}_{5,2^{t}}(4n+1) \pmod4,
		\end{align*}
		where $h_1(p)$ is define by \begin{align*}
			h_1(p)= \begin{cases}
				-1 & \hbox{if} \quad  p\equiv3,7\pmod{20}; \\
				1 & \hbox{if} \quad  p\equiv11,19\pmod{20}.
			\end{cases}
		\end{align*}
	\end{coro}
	 Next, using an identity due to Newman \cite{Newman1959}, we derive infinite families of congruence relation for $\bar{B}_{5,2^t}(n)$ for all integers $t \geq 1$. An application of the Newman identity can be found in \cite{MeherJindal2024}.
	\begin{theorem}\label{thm4.12} 
		Let k be a non-negative integer and $t\geq1$ be an integer. Let p be a prime number with $p \equiv 1 \pmod{4}$. If $\bar{B}_{5,2^t}(p)\equiv0\pmod{4}$, then for $n\geq0$ satisfying $p\nmid(4n+1)$, we have
		\begin{align*}
			\bar{B}_{5,2^t}\left( 4p^{2k+1}n +p^{2k+1}\right)\equiv0\pmod{4}.
		\end{align*} 
	\end{theorem}
	 \begin{example}
	Considering $t = 1$ and $p = 13$ in \eqref{thm4.12}, from table \eqref{table1}, we get
	\[
	\bar{B}_{5,2}(13) = 52 \equiv 0 \pmod{4}.
	\]
	Further, substituting $n = 1$ in \eqref{thm4.12} and since $13 \nmid 5$, we obtain
	\[
	\bar{B}_{5,2}(5p^{2k+1}) \equiv 0 \pmod{4} \quad \forall\, k \geq 0.
	\]
	\end{example}
		We prove the following results for $\bar{B}_{4,3^{t}} (n)$ $ \forall t\geq1$ and  $t\in\mathbb{N}$.
		\begin{theorem}\label{thm8.1}
			For all integers $n,t\geq1$, we have
			\begin{equation}\label{subthm8.2}
				\bar{B}_{4,3^{t}}(3n)\equiv0 \pmod{2}.
			\end{equation}
			and for all integers $n\geq0$, we have
			\begin{equation}\label{subthm8.3}
				\bar{B}_{4,3^{t}}(6n+4)\equiv0 \pmod{4},
			\end{equation}
			\begin{equation}\label{subthm8.4}
				\bar{B}_{4,3^{t}}(12n+7)\equiv0 \pmod{4},
			\end{equation}
			\begin{equation}\label{subthm8.6}
				\bar{B}_{4,3^{t}}(3n+2)\equiv0 \pmod{4}.
			\end{equation}  
			\begin{equation}\label{subthm8.5}
				\sum_{n\geq0}\bar{B}_{4,3^{t}}(12n+1)q^n\equiv2f_1^2 \pmod{4},
			\end{equation}
		\end{theorem}
		\begin{remark}\label{rem1}
		By substituting $t = 1$ into \eqref{subthm8.5}, we obtain \eqref{res1}. Moreover, setting $t = 2$ in \eqref{subthm8.5}, we obtain \eqref{res2} modulo $4$. Further, letting $t = 2$ in \eqref{subthm8.2}, we obtain \eqref{res3} modulo $2$.
		\end{remark}
		By an application of the theory of Hecke eigenforms, we deduce infinite families of congruences for $\bar{B}_{4,3^{t}}(n)$ for all integers $t \geq 1$.
		\begin{theorem}\label{thm4.8a}
			Let $k$ and $n$ be non-negative integers and  $t\geq1$ be an integer. For each $1 \leq i \leq k+1,$ let $p_1, p_2,\ldots,p_{k+1}$ be primes such that $p_i \not \equiv 1 \pmod{12}$. Then for any integer $j \not \equiv 0 \pmod {p_{k+1}},$ we have
			$$\bar{B}_{4,3^{t}} \left( 12p_1^2 p_2^2 \cdots p_k^2 p_{k+1}^2 n + (12j+p_{k+1}) p_1^2 p_2^2 \cdots p_k^2 p_{k+1}  \right) \equiv 0 \pmod4 .$$
		\end{theorem}
		If we put $p_1= p_2= \cdots= p_{k+1}= p$ in Theorem \ref{thm4.8a}, then we obtain the following corollary. 
		\begin{coro}\label{coro4.9a}
			Let $k$ and $n$ be non-negative integers and $t\geq1$ be an integer. Let $p \geq 5$ be a prime such that $p \not \equiv 1 \pmod{12}$. Then we have
			$$\bar{B}_{4,3^{t}}\left(12p^{2k+2}n +12p^{2k+1}j +p^{2k+2} \right) \equiv 0 \pmod 4,$$ whenever $ j \not \equiv 0 \pmod p.$
		\end{coro}
		We further unveil the following multiplicative formulae for $\bar{B}_{4,3^{t}}(n)$ modulo $4$, for all integers $t \geq 1$.
		\begin{theorem}\label{thm4.10a}
		Let  $v\in$$\{5,7,11\}$. Let $k$ be a positive integer and $t\geq1$ be an integer and $p$ be a prime number such that $p\equiv v\pmod{12}$. Let $r$ be a non-negative integer such that $p$ divides $12r+v,$ then
			\begin{align*}
				\bar{B}_{4,3^t}\left(12p^{k+1}n+p(12r+v) \right)\equiv (-1)  \bar{B}_{4,3^t}\left(12p^{k-1}n+ \frac{12r+v}{p} \right) \pmod4.
			\end{align*}
		\end{theorem}
		\begin{coro}\label{coro4.11a} 
		Let $v\in$$\{5,7,11\}$. Let $k$ be a positive integer and $t\geq1$ be an integer and $p$ be a prime number such that $p\equiv v\pmod{12}$. Then
			\begin{align*}
				\bar{B}_{4,3^t}\left(p^{2k}(12n+1) \right) \equiv (-1)^k \bar{B}_{4,3^t}(12n+1) \pmod4.
			\end{align*}
		\end{coro}
	
	Using dissection formulas we obtain the following results for $\bar{B}_{3,2^{t}} (n)$ $ \forall t\geq2$ and $t\in\mathbb{N}$.
		\begin{theorem}\label{thm9.1}
			For all integers $n\geq0$ and $\forall t\geq2$ with  $t\in\mathbb{N}$, we have
			\begin{equation}\label{eq9.1a}
				\bar{B}_{3,2^{t}}(16n+6)\equiv0\pmod{8},
			\end{equation}
			\begin{equation}\label{eq9.2a}
				\bar{B}_{3,2^{t}}(16n+10)\equiv0\pmod{8},
			\end{equation}
			\begin{equation}\label{eq9.3a}
				\bar{B}_{3,2^{t}}(16n+14)\equiv0\pmod{8},
			\end{equation}
			\begin{equation}\label{eq9.5a}
			\bar{B}_{3,2^{t}}(8n+5)\equiv0 \pmod{4},
	    	\end{equation}
			\begin{equation}\label{eq9.9a}
				\bar{B}_{3,2^{t}}(24n+3)\equiv\bar{B}_{3,2^{t}}(8n+1)\pmod{4},
			\end{equation}
			\begin{equation}\label{eq9.8a}
				\bar{B}_{3,2^{t}}(4\cdot(6n+i)+3)\equiv0 \pmod{4}, \quad \forall  i=1,2,3,4,5.
			\end{equation}
		\end{theorem}
		By invoking the theory of Hecke eigenforms, we establish infinite families of congruence satisfied by $\bar{B}_{3,2^{t}}(n)$ for every integer $t \geq 2$.
		\begin{theorem}\label{thm4.8.3a}
			Let $k$ and $n$ be non-negative integers and  $t\geq2$ be an integer. For each $1 \leq i \leq k+1,$ let $p_1, p_2,\ldots,p_{k+1}$ be primes such that $p_i \not \equiv 1 \pmod4$. Then for any integer $j \not \equiv 0 \pmod {p_{k+1}},$ we have
			$$\bar{B}_{3,2^{t}} \left( 4p_1^2 p_2^2 \cdots p_k^2 p_{k+1}^2 n + (4j+p_{k+1}) p_1^2 p_2^2 \cdots p_k^2 p_{k+1}  \right) \equiv 0 \pmod4 .$$
		\end{theorem}
		If we put $p_1= p_2= \cdots= p_{k+1}= p$ in Theorem \ref{thm4.8.3a}, then we obtain the following corollary. 
		\begin{coro}\label{coro4.9.3a}
			Let $k$ and $n$ be non-negative integers and $t\geq2$ be an integer. Let $p \geq 5$ be a prime such that $p \equiv 3 \pmod4.$ Then we have
			$$\bar{B}_{3,2^{t}}\left(4p^{2k+2}n +4p^{2k+1}j +p^{2k+2} \right) \equiv 0 \pmod 4,$$ whenever $ j \not \equiv 0 \pmod p.$
		\end{coro}
		Further, if we substitute $p=7,$ $j\not \equiv 0 \pmod 7$ and $k=0$ in Corollary \ref{coro4.9.3a}, we get
		$$\bar{B}_{3,2^{t}}\left(196n +28j+49 \right) \equiv 0 \pmod 4.$$ If we put $j=1$ in the above congruence, we find that $$\bar{B}_{3,2^t}\left(196n+77 \right) \equiv 0 \pmod 4.$$
		Moreover, we establish the following multiplicative formulae for $\bar{B}_{3,2^{t}}(n)$ modulo $4$, valid for all integers $t \geq 2$.
		\begin{theorem}\label{thm4.10.3a}
			Let $k$ be a positive integer and $t\geq2$ be an integer and $p$ be a prime number such that $ p \equiv 3 \pmod 4.$ Let $r$ be a non-negative integer such that $p$ divides $4r+3,$ then
			\begin{align*}
				\bar{B}_{3,2^t}\left(4p^{k+1}n+p(4r+3) \right)\equiv (-p^2)\cdot h_2(p)  \bar{B}_{3,2^t}\left(4p^{k-1}n+ \frac{4r+3}{p} \right) \pmod4,
			\end{align*}
			where $h_2(p)$ is define by \begin{align*}
				h_2(p)= \begin{cases}
					-1 & \hbox{if} \quad  p\equiv13,15,19,23\pmod{24}; \\
					1 & \hbox{if} \quad  p\equiv1,5,7,11\pmod{24}.
				\end{cases}
			\end{align*}
		\end{theorem}
		\begin{coro}\label{coro4.11.3a} 
			Let $k$ be a positive integer and $t\geq2$ be an integer and $p$ be a prime number such that $p \equiv 3\pmod 4.$ Then
			\begin{align*}
				\bar{B}_{3,2^{t}}\left(p^{2k}(4n+1) \right) \equiv (-p^2)^k\cdot {h_2(p)}^k  \bar{B}_{3,2^{t}}(4n+1) \pmod4,
			\end{align*}
		where $h_2(p)$ is define by \begin{align*}
			h_2(p)= \begin{cases}
				-1 & \hbox{if} \quad  p\equiv13,15,19,23\pmod{24}; \\
				1 & \hbox{if} \quad  p\equiv1,5,7,11\pmod{24}.
			\end{cases}
		\end{align*}
		\end{coro}
		By invoking the theory of Hecke eigenforms, we establish infinite families of congruence satisfied by $\bar{B}_{3,2^{t}}(n)$ for every integer $t \geq 2$.
		\begin{theorem}\label{thm4.8.3b}
			Let $k$ and $n$ be non-negative integers and  $t\geq2$ be an integer. For each $1 \leq i \leq k+1,$ let $p_1, p_2,\ldots,p_{k+1}$ be primes such that $p_i \not \equiv 1 \pmod8$. Then for any integer $j \not \equiv 0 \pmod {p_{k+1}},$ we have
		$$	\bar	B_{3,2^t} \left( 8p_1^2 p_2^2 \cdots p_{k+1}^2 n  + (8j+ p_{k+1})p_1^2 p_2^2 \cdots p_k^2 p_{k+1} \right) \equiv0\pmod{4}.$$
	\end{theorem}
	If we put $p_1= p_2= \cdots= p_{k+1}= p$ in Theorem \ref{thm4.8.3b}, then we obtain the following corollary. 
	\begin{coro}\label{coro4.9.3b}
		Let $k$ and $n$ be non-negative integers and $t\geq2$ be an integer. Let $p \geq 3$ be a prime such that $p\not \equiv 1 \pmod8$. Then we have
		$$\bar{B}_{3,2^{t}}\left(8p^{2k+2}n +8p^{2k+1}j +p^{2k+2} \right) \equiv 0 \pmod 4,$$ whenever $ j \not \equiv 0 \pmod p.$
	\end{coro}
	Moreover, we establish the following multiplicative formulae for $\bar{B}_{3,2^{t}}(n)$ modulo $4$, valid for all integers $t \geq 2$.
	\begin{theorem}\label{thm4.10.3bA}
		Let  $v_1\in$$\{3, 5, 7\}$. Let $k$ be a positive integer and $t\geq2$ be an integer and $p$ be a prime number such that $ p \equiv v_1 \pmod 8.$ Let $r$ be a non-negative integer such that $p$ divides $8r+v_1,$ then
		\begin{align*}
				\bar{B}_{3,2^t}\left(8p^{k+1}n+p(8r+v_1) \right)\equiv- \left(\frac{-2}{p}\right)  \bar{B}_{3,2^t}\left(8p^{k-1}n+ \frac{8r+v_1}{p} \right) \pmod4.
		\end{align*}
	\end{theorem}
	\begin{coro}\label{coro4.11.3bA} 
	Let  $v_1\in$$\{3, 5, 7\}$.	Let $k$ be a positive integer and $t\geq2$ be an integer and $p$ be a prime number such that $ p \equiv v_1 \pmod 8$. Then
		\begin{align*}
			\bar{B}_{3,2^t}\left(p^{2k}(8n+1) \right)\equiv\left(-1\right)^k  \left(\frac{-2}{p}\right)^k \bar{B}_{3,2^t}(8n+1) \pmod4.
		\end{align*}
	\end{coro}
	Next, using an identity due to Newman \cite{Newman1959}, we derive congruence relation for $\bar{B}_{3,2^t}\left(n\right)$ $\forall t\geq2$ and $t\in\mathbb{N}$.
	\begin{theorem}\label{thm4.12A} 
		Let k be an non-negative integer and $t\geq2$ be an integer. Let p be a prime number with $p \equiv 1 \pmod{8}$. If $\bar{B}_{3,2^t}\left(p\right)$ $\equiv0\pmod{4}$, then for $n\geq0$ satisfying $p\nmid(8n+1)$, we have
		\begin{align*}
			\bar{B}_{3,2^t}\left( 8p^{2k+1}n +p^{2k+1}\right)\equiv0\pmod{4}.
		\end{align*} 
	\end{theorem}

	
	\section{Preliminaries}
	We recall some basic facts and definitions on modular forms. For more details, one can see \cite{Koblitz}, \cite{Ono2004}. We start with some matrix groups. We define
	\begin{align*}
		\mathrm{SL_2}(\mathbb{Z}):= &\left\{ \begin{bmatrix}
			a && b \\c && d
		\end{bmatrix}: a, b, c, d \in \mathbb{Z}, ad-bc=1 \right\},\\
		\Gamma_{\infty}:= &\left\{\begin{bmatrix}
			1 &n\\ 0&1	\end{bmatrix}: n \in \mathbb{Z}\right\}.
	\end{align*}
	For a positive integer $N$, we define
	\begin{align*}
		\Gamma_{0}(N):=& \left\{ \begin{bmatrix}
			a && b \\c && d
		\end{bmatrix} \in \mathrm{SL_2}(\mathbb{Z}) : c\equiv0 \pmod N \right\},\\
		\Gamma_{1}(N):=& \left\{ \begin{bmatrix}
			a && b \\c && d
		\end{bmatrix} \in \Gamma_{0}(N) : a\equiv d  \equiv 1 \pmod N \right\}
	\end{align*}
	and 
	\begin{align*}
		\Gamma(N):= \left\{ \begin{bmatrix}
			a && b \\c && d
		\end{bmatrix} \in \mathrm{SL_2}(\mathbb{Z}) : a\equiv d  \equiv 1 \pmod N,  b \equiv c  \equiv 0 \pmod N \right\}.
	\end{align*}
	A subgroup of $\mathrm{SL_2}(\mathbb{Z})$ is called a congruence subgroup if it contains $ \Gamma(N)$ for some $N$ and the smallest $N$ with this property is called its level. Note that $ \Gamma_{0}(N)$ and $ \Gamma_{1}(N)$ are congruence subgroups of level $N,$ whereas $ \mathrm{SL_2}(\mathbb{Z}) $ and $\Gamma_{\infty}$ are congruence subgroups of level $1.$ The index of $\Gamma_0(N)$ in $\mathrm{SL_2}(\mathbb{Z})$ is 
	\begin{align*}
		[\mathrm{SL_2}(\mathbb{Z}):\Gamma_0(N)]=N\prod\limits_{p|N}\left(1+\frac 1p\right),
	\end{align*}
	where $p$ runs over the prime divisors of $N$.
	
	Let $\mathbb{H}$ denote the upper half of the complex plane $\mathbb{C}$. The group 
	\begin{align*}
		\mathrm{GL_2^{+}}(\mathbb{R}):= \left\{ \begin{bmatrix}
			a && b \\c && d
		\end{bmatrix}: a, b, c, d \in \mathbb{R}, ad-bc>0 \right\},
	\end{align*}
	acts on $\mathbb{H}$ by $ \begin{bmatrix}
		a && b \\c && d
	\end{bmatrix} z = \frac{az+b}{cz+d}.$ We identify $\infty$ with $\frac{1}{0}$ and define $ \begin{bmatrix}
		a && b \\c && d
	\end{bmatrix} \frac{r}{s} = \frac{ar+bs}{cr+ds},$ where $\frac{r}{s} \in \mathbb{Q} \cup \{ \infty\}$. This gives an action of $\mathrm{GL_2^{+}}(\mathbb{R})$ on the extended half plane $\mathbb{H}^{*}=\mathbb{H} \cup \mathbb{Q} \cup \{\infty\}$. Suppose that $\Gamma$ is a congruence subgroup of $\mathrm{SL_2}(\mathbb{Z})$. A cusp of $\Gamma$ is an equivalence class in $\mathbb{P}^{1}=\mathbb{Q} \cup \{\infty\}$ under the action of $\Gamma$.
	
	The group $\mathrm{GL_2^{+}}(\mathbb{R})$ also acts on functions $h:\mathbb{H} \rightarrow \mathbb{C}$. In particular, suppose that $\gamma=\begin{bmatrix}
		a && b \\c && d
	\end{bmatrix}\in \mathrm{GL_2^{+}}(\mathbb{R})$. If $f(z)$ is a meromorphic function on $\mathbb{H}$ and $k$ is an integer, then define the slash operator $|_{k}$ by
	\begin{align*}
		(f|_{k} \gamma)(z):= (\det \gamma)^{k/2} (cz+d)^{-k} f(\gamma z).
	\end{align*}
	
	\begin{definition}
		Let $\Gamma$ be a congruence subgroup of level $N$. A holomorphic function $f:\mathbb{H} \rightarrow \mathbb{C}$ is called a modular form with integer weight $k$ on $\Gamma$ if the following hold:
		\begin{enumerate}[$(1)$]
			\item We have
			\begin{align*}
				f \left( \frac{az+b}{cz+d}\right)=(cz+d)^{k} f(z)
			\end{align*}
			for all $z \in \mathbb{H}$ and $\begin{bmatrix}
				a && b \\c && d
			\end{bmatrix}\in \Gamma$. 
			\item If $\gamma\in SL_2 (\mathbb{Z})$, then $(f|_{k} \gamma)(z)$ has a Fourier expansion of the form
			\begin{align*}
				(f|_{k} \gamma)(z):= \sum \limits_{n\geq 0}a_{\gamma}(n) q_N^{n}
			\end{align*}
			where $q_N:=e^{2\pi i z /N}$.
		\end{enumerate}
	\end{definition}
	For a positive integer $k$, the complex vector space of modular forms of weight $k$ with respect to a congruence subgroup $\Gamma$ is denoted by $M_{k}(\Gamma)$.
	
	\begin{definition} \cite[Definition 1.15]{Ono2004}
		If $\chi$ is a Dirichlet character modulo $N$, then we say that a modular form $f \in M_{k}(\Gamma_1(N))$ has Nebentypus character $\chi$ if 
		\begin{align*}
			f \left( \frac{az+b}{cz+d}\right)=\chi(d) (cz+d)^{k} f(z)
		\end{align*}
		for all $z \in \mathbb{H}$ and $\begin{bmatrix}
			a && b \\c && d
		\end{bmatrix}\in \Gamma_{0}(N)$. The space of such modular forms is denoted by $M_{k}(\Gamma_0(N), \chi)$.
	\end{definition}
	
	The relevant modular forms for the results obtained in this article arise from eta-quotients. Recall that the Dedekind eta-function $\eta (z)$ is defined by 
	\begin{align*}
		\eta (z):= q^{1/24}(q;q)_{\infty}=q^{1/24} \prod\limits_{n=1}^{\infty} (1-q^n)
	\end{align*}
	where $q:=e^{2\pi i z}$ and $z \in \mathbb{H}$. A function $f(z)$ is called an eta-quotient if it is of the form
	\begin{align*}
		f(z):= \prod\limits_{\delta|N} \eta(\delta z)^{r_{\delta}}
	\end{align*}
	where $N$ and $r_{\delta}$ are integers with $N>0$. 
	
	\begin{theorem} \cite[Theorem 1.64]{Ono2004} \label{thm2.3}
		If $f(z)=\prod\limits_{\delta|N} \eta(\delta z)^{r_{\delta}}$ is an eta-quotient such that $k= \frac 12$ $\sum_{\delta|N} r_{\delta}\in \mathbb{Z}$, 
		\begin{align*}
			\sum\limits_{\delta|N} \delta r_{\delta} \equiv 0\pmod {24}	\quad \textrm{and} \quad \sum\limits_{\delta|N} \frac{N}{\delta}r_{\delta} \equiv 0\pmod {24},
		\end{align*}
		then $f(z)$ satisfies
		\begin{align*}
			f \left( \frac{az+b}{cz+d}\right)=\chi(d) (cz+d)^{k} f(z)
		\end{align*}
		for each $\begin{bmatrix}
			a && b \\c && d
		\end{bmatrix}\in \Gamma_{0}(N)$. Here the character $\chi$ is defined by $\chi(d):= \left(\frac{(-1)^{k}s}{d}\right)$ where $s=\prod_{\delta|N} \delta ^{r_{\delta}}$.
	\end{theorem}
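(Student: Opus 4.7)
The plan is to derive the transformation law for the eta-quotient $f(z) = \prod_{\delta \mid N}\eta(\delta z)^{r_\delta}$ under $\Gamma_0(N)$ by reducing it to the classical transformation law of $\eta$ itself under $\mathrm{SL}_2(\mathbb{Z})$. Recall that for every $\gamma = \begin{pmatrix} a & b \\ c & d \end{pmatrix}\in \mathrm{SL}_2(\mathbb{Z})$ with $c>0$, one has $\eta(\gamma z) = \epsilon(\gamma)(cz+d)^{1/2}\eta(z)$, where $\epsilon(\gamma)$ is an explicit $24$th root of unity expressible via Dedekind sums.

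First, for any $\delta \mid N$ and $\gamma = \begin{pmatrix} a & b \\ c & d \end{pmatrix}\in \Gamma_0(N)$, since $\delta \mid N \mid c$, the matrix $\gamma_\delta := \begin{pmatrix} a & b\delta \\ c/\delta & d \end{pmatrix}$ lies in $\mathrm{SL}_2(\mathbb{Z})$ and a direct check gives $\delta(\gamma z) = \gamma_\delta(\delta z)$. Applying the classical formula to each factor yields
\[
\eta(\delta \gamma z) = \epsilon(\gamma_\delta)(cz+d)^{1/2}\eta(\delta z),
\]
and taking the product over $\delta \mid N$ with exponents $r_\delta$ produces
\[
f(\gamma z) = \Phi(\gamma)(cz+d)^{k}f(z), \qquad \Phi(\gamma) := \prod_{\delta \mid N}\epsilon(\gamma_\delta)^{r_\delta},
\]
where the exponent is $k$ because $\tfrac{1}{2}\sum_\delta r_\delta = k \in \mathbb{Z}$. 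Thus $f$ already carries the correct weight-$k$ automorphy factor for free; only the multiplier $\Phi(\gamma)$ remains to be identified.

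The principal obstacle is showing that the $24$th root of unity $\Phi(\gamma)$ equals $\chi(d) = \left(\frac{(-1)^{k}s}{d}\right)$ with $s = \prod_\delta \delta^{r_\delta}$. For this I would substitute the Dedekind-sum expression for each $\epsilon(\gamma_\delta)$, so that $\Phi(\gamma) = \exp(\pi i\, T)$ where $T$ splits into an \emph{elementary} part proportional to $\frac{a+d}{12c}\sum_\delta \delta r_\delta$ and a \emph{Dedekind-sum} part $\sum_\delta r_\delta\, s(d,\, c/\delta)$. The hypothesis $\sum_\delta \delta r_\delta \equiv 0 \pmod{24}$ kills the elementary part modulo~$2$, while applying Dedekind reciprocity to each $s(d,c/\delta)$ introduces dual terms controlled by the second hypothesis $\sum_\delta (N/\delta)\, r_\delta \equiv 0 \pmod{24}$. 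After these cancellations, the only surviving contribution is a quadratic-residue symbol in $d$ whose underlying modulus is $(-1)^k s$, giving precisely $\chi(d)$. The bookkeeping of signs and $24$th-root phases in this final step is delicate, and it is where both congruence hypotheses and the precise definition of $s$ do all the real work.
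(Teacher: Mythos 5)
First, note that the paper does not prove this statement at all: it is quoted verbatim as \cite[Theorem 1.64]{Ono2004} (the result of Gordon--Hughes, Newman and Ligozat), so there is no internal proof to compare against; your outline follows the same standard route as the literature proof, namely reduction to the classical transformation law of $\eta$ and evaluation of the resulting multiplier via Dedekind sums.

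Your preliminary reduction is correct and complete: for $\delta\mid N$ and $\gamma=\begin{bmatrix} a & b\\ c& d\end{bmatrix}\in\Gamma_0(N)$ the matrix $\gamma_\delta=\begin{bmatrix} a & b\delta\\ c/\delta & d\end{bmatrix}$ is indeed in $\mathrm{SL_2}(\mathbb{Z})$, satisfies $\delta(\gamma z)=\gamma_\delta(\delta z)$, and the weight-$k$ automorphy factor $(cz+d)^k$ comes out correctly since $k=\tfrac12\sum_\delta r_\delta\in\mathbb{Z}$. The genuine gap is that the entire content of the theorem lies in the step you defer: showing that the $24$th root of unity $\Phi(\gamma)=\prod_{\delta\mid N}\epsilon(\gamma_\delta)^{r_\delta}$ collapses to the real character $\chi(d)=\left(\frac{(-1)^k s}{d}\right)$ with $s=\prod_{\delta\mid N}\delta^{r_\delta}$. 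You describe what ``would'' happen (the elementary phase controlled by $\sum_\delta \delta r_\delta\equiv 0\pmod{24}$, Dedekind reciprocity bringing in $\sum_\delta (N/\delta) r_\delta\equiv 0\pmod{24}$, and a residual Jacobi symbol), but none of this is carried out, and it is exactly here that the two congruence hypotheses, the parity of $k$, and the sign conventions of the Kronecker symbol must be tracked; without that computation one has only proved that $f$ transforms with \emph{some} multiplier system of absolute value one, not with the stated Nebentypus $\chi$. Two smaller omissions of the same kind: the classical formula $\eta(\gamma z)=\epsilon(\gamma)(cz+d)^{1/2}\eta(z)$ as you invoke it requires $c>0$, so the cases $c=0$ and $c<0$ (i.e.\ replacing $\gamma$ by $-\gamma$, and translations, where $\sum_\delta\delta r_\delta\equiv0\pmod{24}$ guarantees invariance under $z\mapsto z+1$) need separate, if easy, treatment, and one must check consistency of the chosen branch of $(cz+d)^{1/2}$ when some $r_\delta$ are negative. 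As written, the proposal is a correct road map for the standard argument rather than a proof.
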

	
	\begin{theorem} \cite[Theorem 1.65]{Ono2004} \label{thm2.4}
		Let $c,d$ and $N$ be positive integers with $d|N$ and $\gcd(c,d)=1$. If $f$ is an eta-quotient satisfying the conditions of Theorem \ref{thm2.3} for $N$, then the order of vanishing of $f(z)$ at the cusp $\frac{c}{d}$ is
		\begin{align*}
			\frac{N}{24}\sum\limits_{\delta|N} \frac{\gcd(d, \delta)^2 r_{\delta}}{\gcd(d, \frac{N}{ d} )d \delta}.
		\end{align*}
	\end{theorem}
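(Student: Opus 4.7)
\textbf{Proof plan for Theorem~\ref{thm2.4}.} The plan is to compute the order of vanishing factor by factor: since the order is additive on products, it suffices to establish for each $\delta \mid N$ the single-factor identity
\begin{align*}
\mathrm{ord}_{c/d}\bigl(\eta(\delta z)\bigr) \;=\; \frac{N}{24}\cdot \frac{\gcd(d,\delta)^2}{\gcd(d,N/d)\, d\, \delta}
\end{align*}
and then sum the contributions weighted by $r_{\delta}$. The idea is to combine the classical transformation law $\eta(\gamma' w) = \varepsilon(\gamma')(c'w+d')^{1/2}\eta(w)$ for $\gamma' \in \mathrm{SL}_2(\mathbb{Z})$ with a B\'ezout-type matrix factorization.

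First I would choose $\gamma = \begin{bmatrix} c & b \\ d & d' \end{bmatrix} \in \mathrm{SL}_2(\mathbb{Z})$ sending $\infty$ to $c/d$, which exists because $\gcd(c,d)=1$. The cusp $c/d$ has width $h = N/\bigl(d\cdot \gcd(d,N/d)\bigr)$ in $\Gamma_0(N)$, so the order of vanishing is the smallest power of $q_h = e^{2\pi i z/h}$ in the expansion of $(\eta(\delta\cdot)|_{1/2}\gamma)(z)$ at $\infty$. Set $g = \gcd(\delta,d)$ and write $\delta = g\delta_1$, $d = g d_1$ with $\gcd(\delta_1,d_1)=1$; note also $\gcd(c,d_1)=1$. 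The heart of the argument is to reduce the integer matrix $\delta\gamma$, of determinant $\delta$, to upper-triangular form by left multiplication: one shows
\begin{align*}
\delta\gamma \;=\; U \cdot \begin{bmatrix} g & b' \\ 0 & \delta_1 \end{bmatrix}
\end{align*}
for suitable $U \in \mathrm{SL}_2(\mathbb{Z})$ and $b' \in \mathbb{Z}$. The diagonal entries $g$ and $\delta_1$ are forced: matching first columns requires the first column of $U$ to be $(\delta_1 c,\, d_1)^{T}$, which is coprime, and then $b'$ and the remaining entry of $U$ are chosen so that $d_1 b' \equiv d' \pmod{\delta_1}$ (solvable by B\'ezout since $\gcd(\delta_1,d_1)=1$), with $\det U = \det \gamma = 1$ emerging automatically.

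With this factorization in hand, $\delta\gamma z = U\bigl(\tfrac{g^2}{\delta}z + \tfrac{b'}{\delta_1}\bigr)$. Applying the $\eta$-transformation under $U$ and using $\eta(w) = e^{2\pi i w/24}\prod_{n\geq 1}(1-e^{2\pi i n w})$, the prefactors $(dz+d')^{-1/2}$ and $(c_U w + d_U)^{1/2}$ are smooth and nonvanishing as $z\to i\infty$, so the leading exponent of $\eta(\delta\gamma z)$ in $q = e^{2\pi i z}$ is simply $\tfrac{g^2}{24\delta}$. Converting to the $q_h$-expansion via $q = q_h^{h}$ multiplies this by $h$, yielding $\tfrac{N}{24}\cdot \tfrac{g^2}{\gcd(d,N/d)\, d\, \delta}$, exactly the per-factor formula; summing with weights $r_{\delta}$ finishes the proof.

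The main obstacle is the matrix-factorization step, where one must verify that $\gcd(d,\delta)$ and $\delta/\gcd(d,\delta)$ are the forced diagonal entries of the upper-triangular form. This is precisely where the numerator $\gcd(d,\delta)^2$ in the theorem originates, since the factor of $g$ appears once from the diagonal entry of the triangular matrix and once more through the rescaled width. The remaining ingredients (the transformation law for $\eta$ and the cusp-width formula) are standard.
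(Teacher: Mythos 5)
The paper itself offers no proof of this statement: it is quoted verbatim from Ono's book (Theorem 1.65 there, a formula due to Ligozat), so there is nothing internal to compare your argument against. Your sketch is a correct reconstruction of the standard proof of that result: choosing $\gamma\in\mathrm{SL}_2(\mathbb{Z})$ with $\gamma(\infty)=c/d$, upper-triangularizing the determinant-$\delta$ matrix so that $\eta(\delta\gamma z)$ has leading $q$-exponent $\gcd(d,\delta)^2/(24\delta)$, and rescaling by the cusp width $N/\bigl(d\gcd(d,N/d)\bigr)$ — the factorization details (forced diagonal $\gcd(d,\delta)$, $\delta/\gcd(d,\delta)$, solvability of $d_1b'\equiv d'\pmod{\delta_1}$, and $\det U=1$ automatically) all check out, as does the additivity over the factors $\eta(\delta z)^{r_\delta}$. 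The only point worth making explicit is that each individual $\eta(\delta z)$ has weight $1/2$ and a nontrivial multiplier, so the per-factor transformation involves a root-of-unity constant; since such constants are nonzero they do not affect orders of vanishing, and your factor-by-factor computation is legitimate even though only the full product satisfies the hypotheses of Theorem \ref{thm2.3}.
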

	Suppose that $f(z)$ is an eta-quotient satisfying the conditions of Theorem \ref{thm2.3} and that the associated weight $k$ is a positive integer. If $f(z)$ is holomorphic at all of the cusps of $\Gamma_0(N)$, then $f(z) \in M_{k}(\Gamma_0(N), \chi)$. Theorem \ref{thm2.4} gives the necessary criterion for determining orders of an eta-quotient at cusps. In the proofs of our results, we use Theorems \ref{thm2.3} and \ref{thm2.4} to prove that $f(z) \in M_{k}(\Gamma_0(N), \chi)$ for the eta-quotients $f(z),$ considered in the sequel.

	We finally recall the definition of Hecke operators and a few relevant results. Let $m$ be a positive integer and $f(z)= \sum \limits_{n= 0}^{\infty}b(n) q^{n}\in M_{k}(\Gamma_0(N), \chi)$. Then the action of Hecke operator $T_m$ on $f(z)$ is defined by
	\begin{align*}
		f(z)|T_{m} := \sum \limits_{n= 0}^{\infty} \left(\sum \limits_{d|\gcd(n,m)} \chi(d) d^{k-1} b\left(\frac{mn}{d^2}\right)\right)q^{n}.
	\end{align*}
	In particular, if $m=p$ is a prime, we have
	\begin{align*}
		f(z)|T_p := \sum \limits_{n= 0}^{\infty}\left( b(pn) + \chi(p) p^{k-1} b\left(\frac{n}{p}\right)\right)q^{n}.
	\end{align*}
	We note that $b(n)=0$ unless $n$ is a non-negative integer.
	
	Let $p$ be any odd prime and $a$ be any integer relatively prime to $p$. The \textit{Legendre symbol} $\left( \frac{a}{p} \right)$ is defined by
	\[
	\left( \frac{a}{p} \right) =
	\begin{cases}
		1 & \text{if $a$ is a quadratic residue of $p$,} \\
		-1 & \text{if $a$ is a quadratic non-residue of $p$.}
	\end{cases}
	\]
	\begin{lemma}\label{lem2}
				We have
				\begin{align}
					\frac{1}{f_1^2} &= \frac{f_8^5}{f_2^5 f_{16}^2} + 2q \frac{f_4^2 f_{16}^2}{f_2^5 f_8}, \label{eq1} \\[6pt]
					\frac{1}{f_1^4} &= \frac{f_4^{14}}{f_2^{14} f_8^4} + 4q \frac{f_4^2 f_8^4}{f_2^{10}}, \label{eq2} \\[6pt]
					\frac{f_3}{f_1} &= \frac{f_4 f_6 f_{16} f_{24}^2}{f_2^2 f_8 f_{12} f_{48}} +q \frac{f_6 f_8^2 f_{48}}{f_2^2 f_{16} f_{24}}.\label{eq3.1}\\[6pt]
					\left(\frac{f_3}{f_1}\right)\cdot\left(\frac{1}{f_1^2}\right)
					&= \frac{f_4 f_6 f_8^4 f^2_{24}}{f_2^7 f_{12} f_{16} f_{48}}
					+ 2q \frac{f_4^3 f_6 f_{16}^3 f_{24}^2}{f_2^7 f_8^2 f_{12} f_{48}}+q \frac{f_6 f_8^7 f_{48}}{f_2^7 f_{16}^3 f_{24}}\nonumber \\
					&\quad 
					+2q^2 \frac{f_4^2 f_6 f_8 f_{16} f_{48}}{f_{2}^7 f_{24}}.\label{eq3.2}
				\end{align}
			\end{lemma}
			
	   	\begin{proof}
	   		Equations \eqref{eq1}, \eqref{eq2} are immediate consequences of dissection formulas of Ramanujan, collected in Berndt's book \cite{B.C.Berndt1991}. For \eqref{eq3.1}, see \cite{Hirschhorn2017}. Multiplying \eqref{eq3.1} and \eqref{eq1} yields \eqref{eq3.2}.
	   	\end{proof}
		\begin{lemma}
			The following identities hold:
			\begin{align}
				\frac{f_2}{f_1^2} &= \frac{f_6^4 f_9^6}{f_3^8 f_{18}^3} + 2q \frac{f_6^3 f_9^3}{f_3^7} + 4q^2 \frac{f_6^2 f_{18}^3}{f_3^6}, \label{eq7} \\[6pt]
				\frac{f_1^2}{f_2} &= \frac{f_9^2}{f_{18}} - 2q \frac{f_3 f_{18}^2}{f_6 f_9}, \label{eq8.1} \\[6pt]
				\frac{f_4^2}{f_8} &= \frac{f_{36}^2}{f_{72}} - 2q^4 \frac{f_{12} f_{72}^2}{f_{24} f_{36}}, \label{eq8.2} \\[6pt]
			\left(\frac{f_2}{f_1^2}\right)\cdot\left(\frac{f_4^2}{f_8}\right) &= \frac{f_{6}^4 f_9^6 f_{36}^2}{f_3^8 f^3_{18} f_{72}}+2q\frac{f_{6}^3 f_9^3 f_{36}^2}{f_3^7 f_{72}}+4q^2\frac{f_{6}^2 f_{18}^3 f_{36}^2}{ f_3^6 f_{72}} \nonumber \\
				&\quad - 2q^4 \frac{f_6^4 f_9^6 f_{12} f_{72}^2}{ f_3^8 f_{18}^3 f_{24} f_{36}} -4q^5\frac{f_6^3 f_9^3 f_{12} f_{72}^2}{ f_3^7 f_{24} f_{36}} \nonumber \\ &\quad -8q^6\frac{f_6^2 f_{12} f_{18}^3 f_{72}^2}{ f_3^6 f_{24} f_{36}}.\label{eq8.3}
				\end{align}
		\end{lemma}
			\begin{proof}
		Equation \eqref{eq7} is proved in \cite{Hirschhorn2005}.  \eqref{eq8.1} appears as (14.3.2) in  \cite{Hirschhorn2017}. Equation \eqref{eq8.2} follows directly from \eqref{eq8.1} by replacing $q$ by $q^4$. We get \eqref{eq8.3} by multiplying \eqref{eq7} and \eqref{eq8.2}.
		\end{proof}
			We will use the next lemma repeatedly in our work.
			\begin{lemma}
		For all primes p and all integers $ k,m\ge 1 $, we have
		\begin{equation}\label{lem2.9}
			f_{pm}^{p^{k-1}} \equiv {{f^{p^k}_m}} \pmod {p^k}. 
		\end{equation}
			\end{lemma}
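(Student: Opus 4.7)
The plan is to prove the congruence $f_{pm}^{p^{k-1}} \equiv f_m^{p^k} \pmod{p^k}$ by induction on $k$, with the base case handled by the Frobenius/freshman's-dream identity and the inductive step performed by raising the previous congruence to the $p$-th power and controlling the resulting binomial error terms.

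For the base case $k=1$, I would write $f_m = \prod_{n\geq 1}(1-q^{mn})$ and use $(1-x)^p \equiv 1 - x^p \pmod p$ termwise to obtain
\begin{equation*}
f_m^p = \prod_{n\geq 1}(1-q^{mn})^p \equiv \prod_{n\geq 1}(1-q^{pmn}) = f_{pm} \pmod p,
\end{equation*}
which is exactly the case $k=1$.

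For the inductive step, assume $f_{pm}^{p^{k-1}} \equiv f_m^{p^k} \pmod{p^k}$, so we may write $f_{pm}^{p^{k-1}} = f_m^{p^k} + p^k g$ for some integer-coefficient power series $g$. Raising both sides to the $p$-th power and applying the binomial expansion gives
\begin{equation*}
f_{pm}^{p^k} = \sum_{j=0}^{p} \binom{p}{j} f_m^{p^k(p-j)}\,(p^k g)^j.
\end{equation*}
The $j=0$ term is $f_m^{p^{k+1}}$, which is the desired leading term. I then need to check that every $j\geq 1$ term is divisible by $p^{k+1}$. For $1\leq j\leq p-1$, the coefficient $\binom{p}{j}$ contributes one factor of $p$ and $(p^k g)^j$ contributes $p^{jk}$, giving divisibility by $p^{1+jk}\geq p^{1+k}$. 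For $j=p$, we get $p^{kp}$, and $kp\geq k+1$ whenever $k\geq 1$ and $p\geq 2$. Hence all error terms vanish mod $p^{k+1}$, yielding the congruence for $k+1$.

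I do not anticipate any real obstacle here; the only mild point is verifying the exponent inequality $kp \geq k+1$ in the $j=p$ case, which is immediate. The argument uses nothing beyond the binomial theorem and the standard fact that $p\mid \binom{p}{j}$ for $1\leq j\leq p-1$, so the proof should be short and self-contained.
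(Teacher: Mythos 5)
Your proof is correct. Note, though, that the paper does not actually prove this lemma at all: its ``proof'' is a one-line citation to Lemma~3 of da Silva and Sellers (the reference \cite{Sellers2020}). Your induction on $k$ --- freshman's dream $(1-x)^p\equiv 1-x^p \pmod p$ applied factorwise for the base case, then writing $f_{pm}^{p^{k-1}}=f_m^{p^k}+p^k g$ and expanding the $p$-th power binomially, with $p\mid\binom{p}{j}$ for $1\le j\le p-1$ and $kp\ge k+1$ handling the $j=p$ term --- is precisely the standard argument behind that cited result, so what you have written is a correct, self-contained replacement for the citation rather than a genuinely different method. The only point worth a word in a written version is the passage from the termwise congruence to the infinite product in the base case: since each $f$ is a formal power series with constant term $1$ and the coefficient of any fixed power of $q$ involves only finitely many factors, the congruence indeed passes to the product; similarly, $g$ obtained from the inductive hypothesis lies in $\mathbb{Z}[[q]]$, so all steps stay inside integer power series. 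With that remark your argument is complete.
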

			\begin{proof}
				See [\cite{Sellers2020}, lemma 3].
			\end{proof}
			\begin{lemma}\label{eq9}
				The following identities hold:
				\begin{align}
					\frac{f_5}{f_1} &= \frac{f_8 f_{20}^2}{f_2^2 f_{40}} + q \frac{f_4^3 f_{10} f_{40}}{f_2^3 f_8 f_{20}}, \label{eq9a} \\[6pt]
					\frac{f_5^2}{f_1^2} &= \frac{f_8^2 f_{20}^4}{f_2^4 f_{40}^2} + 2q \frac{f_4^3 f_{10} f_{20}}{f_2^5} + q^2 \frac{f_4^6 f_{10}^2 f_{40}^2}{f_2^6 f_8^2 f_{20}^2}. \label{eq10}
				\end{align}
			\end{lemma}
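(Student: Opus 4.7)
The plan is to treat \eqref{eq9a} and \eqref{eq10} separately, with the second identity following routinely from the first by squaring.

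For \eqref{eq9a}, my approach is to invoke the classical Ramanujan-type 2-dissection of $f_5/f_1$. One can rewrite $f_5/f_1 = (q^5;q^5)_\infty/(q;q)_\infty$ in terms of Ramanujan's theta functions $\varphi$ and $\psi$, then split the resulting Jacobi-triple-product expansion into its even and odd components in $q$, recognizing each piece as an eta-quotient. This identity is well documented in the dissection literature; in particular it appears in Hirschhorn's treatise \cite{Hirschhorn2017}, which already serves as a reference for several earlier formulas in this section (for instance \eqref{eq8}, \eqref{eq8.1}). Consistent with the citation style used for Lemmas \ref{lem2} and \ref{lem3}, I would quote \eqref{eq9a} directly from that source rather than reproduce the full theta-function manipulation.

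For \eqref{eq10}, the derivation is essentially a one-line algebraic argument. Write \eqref{eq9a} schematically as $f_5/f_1 = A + qB$, where
$$A := \frac{f_8 f_{20}^2}{f_2^2 f_{40}}, \qquad B := \frac{f_4^3 f_{10} f_{40}}{f_2^3 f_8 f_{20}}.$$
Squaring yields $(f_5/f_1)^2 = A^2 + 2qAB + q^2 B^2$. Both $A^2$ and $B^2$ read off directly as the first and third terms of \eqref{eq10} by inspection of the exponents of each $f_d$. For the cross term, the factors $f_8$, $f_{20}$, and $f_{40}$ in the numerator of $A$ cancel against those appearing in the denominator of $B$ (and vice versa), collapsing $AB$ to $f_4^3 f_{10} f_{20}/f_2^5$, which is exactly the middle coefficient in \eqref{eq10}.

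The main obstacle, if any, is establishing \eqref{eq9a}, since this is the substantive content of the lemma, whereas \eqref{eq10} is a mechanical consequence. Because \eqref{eq9a} is a well-known Ramanujan-type identity, the cleanest route is to supply a citation to Hirschhorn (or an equivalent source such as Xia--Yao), after which the proof of the lemma reduces to the remark that \eqref{eq10} is obtained by squaring both sides of \eqref{eq9a}.
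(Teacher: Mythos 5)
Your proposal matches the paper's proof: the paper likewise quotes \eqref{eq9a} from the literature (specifically Hirschhorn and Sellers \cite{Hirschhorn2010}) and obtains \eqref{eq10} by squaring, exactly as you do. Your explicit verification of the cross term $AB=f_4^3 f_{10} f_{20}/f_2^5$ is correct, so the argument is sound; only your suggested citation differs from the paper's.
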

		\begin{proof}
			Equation \eqref{eq9a} is proved by Hirschhorn and Sellers in \cite{Hirschhorn2010}. \eqref{eq10} follows from \eqref{eq9a}.
		\end{proof}
		\begin{lemma}\label{eq10a}
			We list the following identities:
			\begin{align}
				\frac{f_3^2}{f_1^2} &= \frac{f_4^4 f_6 f_{12}^2}{f_2^5 f_8 f_{24}} + 2q \frac{f_4 f_6^2 f_8 f_{24}}{f_2^4 f_{12}}, \label{eq10b} \\[6pt]
				\frac{f_3^3}{f_1} &= \frac{f_4^3 f_6^2}{f_2^2 f_{12}} + q \frac{f_{12}^3}{f_4}. \label{eq10f}
		\end{align}
		\end{lemma}
		\begin{proof}
			Xia and Yao proved \eqref{eq10b} in \cite{Yao2013}. \eqref{eq10f} is also proved in \cite{Yao2013}.
		\end{proof}

			\section{Proof of Theorems}

     
   \subsection{Congruences for ${ \bar B}_{5,2^{t}}(n)$}  In this subsection, we derive some new congruences for the counting sequence $\bar{B}_{5,2^{t}}(n)$ $\forall t\geq 1$ with $t\in\mathbb{N}$. By setting $(\ell_1,\ell_2)= (5,2^{t})$ in \eqref{def1}, we get an expression of the form 
  \begin{equation} \label{eq4.1}
  	\sum_{n\geq0}\bar{B}_{5,2^{t}}(n)q^n=\frac{ f_2 f_5^2 f^2_{2^t}f_{5\cdot2^{t+1}}}{f_1^2 f_{10} f_{2^{t+1}} f^2_{5\cdot2^t}}.
  \end{equation}
   \begin{prop}\label{prop0}
   		For all integers $n\geq0$, we have
   	\begin{equation}\label{prop0.1}
   		\sum_{n\geq0}\bar{B}_{5,2^{t}}(4n+1)q^n\equiv 2 f_1f_{5} \pmod4,
   	\end{equation}
   	\begin{equation}\label{prop0.2}
   		\bar{B}_{5,2^{t}}(4n+3)\equiv 0 \pmod4.
   	\end{equation}
   \end{prop}
   \begin{proof}[Proof of Proposition \ref{prop0}] 
   Employing \eqref{eq10} into \eqref{eq4.1}, we find that
   \begin{equation}\label{eq4.2}
   	\sum_{n\geq0}\bar{B}_{5,2^{t}}(n)q^n=\frac{ f^2_{2^t} f_{5\cdot2^{t+1}} f^2_8 f_{20}^4 }{f_2^3 f_{10} f_{2^{t+1}} f^2_{5\cdot2^t} f^2_{40}} +2q\frac{f_4^3 f^2_{2^t} f_{5\cdot2^{t+1}}f_{20} }{f_2^4 f_{2^{t+1}}f^2_{5\cdot2^t}}+q^2\frac{f_4^6 f_{10} f^2_{2^t}f_{5\cdot2^{t+1}} f^2_{40} }{f_2^5 f_{2^{t+1}} f_8^2 f^2_{5\cdot2^t} f^2_{20}}.
   \end{equation}
   Now, extracting the even and odd powers of $q$ from both sides of \eqref{eq4.2}, we have  
  \begin{equation}\label{eq4.3}
  	\sum_{n\geq0}\bar{B}_{5,2^{t}}(2n)q^n=\frac{ f^2_{2^{t-1}} f_{5\cdot2^{t}} f^2_4 f_{10}^4 }{f_1^3 f_{5} f_{2^{t}} f^2_{5\cdot2^{t-1}} f^2_{20}} +q\frac{f_2^6 f_{5} f^2_{2^{t-1}}f_{5\cdot2^{t}} f^2_{20} }{f_1^5 f_{2^{t}} f_4^2 f^2_{5\cdot2^{t-1}} f^2_{10}},
  \end{equation}
  \begin{equation}\label{eq4.4}
  	\sum_{n\geq0}\bar{B}_{5,2^{t}}(2n+1)q^n=2\frac{f_2^3 f^2_{2^{t-1}} f_{5\cdot2^{t}}f_{10} }{f_1^4 f_{2^{t}}f^2_{5\cdot2^{t-1}}}.
  \end{equation}
 Using equation \eqref{lem2.9}, \eqref{eq4.4} can be rewritten as
   \begin{equation}\label{eq4.5}
   	\sum_{n\geq0}\bar{B}_{5,2^{t}}(2n+1)q^n \equiv 2 f_2f_{10} \pmod4. 
   \end{equation} 
   Extracting the even and odd powers of $q$ from both sides of \eqref{eq4.5}, we find 
   \begin{equation}\label{eq4.6}
    \sum_{n\geq0}\bar{B}_{5,2^{t}}(4n+1)q^n\equiv 2 f_1f_{5} \pmod4,
   \end{equation},
   \begin{equation}\label{eq4.7}
   	\sum_{n\geq0}\bar{B}_{5,2^{t}}(4n+3)\equiv 0 \pmod4,
   \end{equation}
   This completes the proof. 
\end{proof}
   
   \begin{proof}[Proof of Theorem \ref{thm4.8}]
   		From equation \eqref{eq4.6}, we have
   		\begin{equation}\label{eq800b}
   			\sum_{n\geq0}\bar{B}_{5,2^t}(4n +1)q^n\equiv2 f_1 f_5\pmod{4}.
   		\end{equation}
   		Thus, we have
   		\begin{equation}\label{eq801b}
   			\sum_{n=0}^{\infty} \bar{B}_{5,2^t}(4n+1)q^{4n+1} \equiv 2\eta(4z) \eta(20z)\pmod4 .
   		\end{equation}
   		By using Theorem \ref{thm2.3}, we obtain $\eta(4z)\eta(20z) \in S_1(\Gamma_{0}(80), \left(\frac{-20}{\bullet}\right)). $ Thus $\eta(4z)\eta(20z) $ has a Fourier expansion i.e.
   		\begin{equation}\label{eq802b}
   			\eta(4z)\eta(20z)= q-q^5-q^{9}+q^{25}+\cdots= \sum_{n=1}^{\infty}  b(n) q^n .
   		\end{equation}
   		Therefore, $b(n)=0$ if $n \not \equiv 1 \pmod 4,$ for all $n \geq 0.$ From \eqref{eq801b} and \eqref{eq802b}, comparing the coefficients of $q^{4n+1}$, we get
   		\begin{equation}\label{eq803b}
   			\bar B_{5,2^t}(4n+1) \equiv 2 b(4n+1) \pmod 4.
   		\end{equation}
   		Since $ \eta(4z)\eta(20z)$ is a Hecke eigenform (see \cite{Martin1996}), it gives
   		$$\eta(4z)\eta(20z)|T_p= \sum_{n=1}^{\infty} \left(b(pn)+    \left(\frac{-20}{p}\right) b\left(\frac{n}{p}\right)\right) q^n = \lambda(p)\sum_{n=1}^{\infty} b(n)q^n.$$  Comparing the coefficients of $q^n$ on both sides of the above equation, we get
   		\begin{equation}\label{eq804b}
   			b(pn)+  \left(\frac{-20}{p}\right)b\left(\frac{n}{p}\right) = \lambda(p) b(n).
   		\end{equation}
   		Since $b(1)=1$ and $b(\frac{1}{p})=0,$ if we put $n=1$ in the above expression, we get $b(p)=\lambda(p).$ As $b(p)=0$ for all $p \not \equiv 1 \pmod 4$ this implies that $\lambda(p)=0$ for all $p \not \equiv 1 \pmod 4.$ From \eqref{eq804b}, we get that for all $p \not \equiv 1 \pmod 4 $  
   		\begin{equation}\label{eq805b}
   			b(pn)+ \left(\frac{-20}{p}\right) b\left(\frac{n}{p}\right) =0.
   		\end{equation}
   		Now, we consider two cases here. If $p \not| n,$ then 
   		replacing $n$ by $pn+r$ with $\gcd(r,p)=1$ in \eqref{eq805b}, we get
   		\begin{equation}\label{eq806b}
   			b(p^2n+ pr)=0 .
   		\end{equation}
   		Substituting $n$ by $4n-pr+1$ in \eqref{eq806b} and using \eqref{eq803b}, we have
   		\begin{equation}\label{eq807b}
   			\bar	B_{5,2^t}\left(4p^2n +p^2 +pr(1-p^2)\right) \equiv 0 \pmod4.
   		\end{equation}
   		Now, we consider the second case, when $p\mid n.$ Here replacing $n$ by $pn$ in \eqref{eq805b}, we get
   		\begin{equation}\label{eq808b}
   			b(p^2n) =  (-1) \left(\frac{-20}{p}\right) b\left(n\right).
   		\end{equation}
   		Similarly, substituting $n$ by $4n+1$ in \eqref{eq808b} and using \eqref{eq803b}, we get
   		\begin{equation}\label{eq809b}
   			\bar B_{5,2^t}\left(4p^2n + p^2\right) \equiv(-1) \left(\frac{-20}{p}\right) \bar B_{5,2^t} \left(4n+1\right)\pmod{4}.
   		\end{equation}
   		Since $\gcd(\frac{1-p^2}{4},p) =1$, when r runs over a residue system excluding the multiples of $p$, so does $\frac{(1-p^2)r}{4}$. Thus for $p \not| j$, we can rewrite \eqref{eq807b} as 
   		\begin{equation}\label{eq810b}
   			\bar	B_{5,2^t}\left(4p^2n +p^2 +4pj\right) \equiv 0 \pmod4.
   		\end{equation}
   		Let $p_i \geq 5$ be primes such that $p_i \not\equiv1 \pmod 4$. Further note that
   		$$ 4p_1^2 p_2^2 \cdots p_k^2 n +p_1^2 p_2^2 \cdots p_k^2 = 4p_1^2 \left(p_2^2 \cdots p_k^2 n +\frac{p_2^2\cdots p_k^2 -1}{4}\right) +p_1^2.$$
   		Repeatedly using \eqref{eq809b}, we get
   			\begin{equation}\label{eq811b}
   			\begin{aligned}
   				\bar{B}_{5,2^t}
   				\left(
   				4p_1^2 p_2^2 \cdots p_k^2 n
   				+
   				p_1^2 p_2^2 \cdots p_k^2
   				\right)
   				&=
   				\bar{B}_{5,2^t}
   				\Biggl(
   				4p_1^2
   				\Bigl(
   				p_2^2 \cdots p_k^2 n
   				+
   				\frac{p_2^2 \cdots p_k^2 -1}{4}
   				\Bigr)
   				+
   				p_1^2
   				\Biggr)
   				\\[1ex]
   				&\equiv
   				-
   				\left(\frac{-20}{p_1}\right)
   				\bar{B}_{5,2^t}
   				\left(
   				4 p_2^2 \cdots p_k^2 n
   				+
   				p_2^2 \cdots p_k^2
   				\right)
   				\\[1ex]
   				&\equiv \cdots
   				\\[1ex]
   				&\equiv
   				(-1)^k
   				\prod_{i=1}^{k}
   				\left(\frac{-20}{p_i}\right)
   				\,\bar{B}_{5,2^t}(4n+1)
   				\pmod{4}.
   			\end{aligned}
   		\end{equation}
   		Let $j\not \equiv 0 \pmod{ p_{k+1}}.$ Thus \eqref{eq810b} and \eqref{eq811b} yield
   		\begin{equation}
   			\bar	B_{5,2^t} \left( 4p_1^2 p_2^2 \cdots p_{k+1}^2 n  + (4j+ p_{k+1})p_1^2 p_2^2 \cdots p_k^2 p_{k+1} \right) \equiv0\pmod{4}.
   		\end{equation}
   		This proves our claim.
   		
   	\end{proof}

   \begin{proof}[Proof of Theorem \ref{thm4.10}]
 From \eqref{eq805b}, we get that for any prime $p \equiv 3\pmod 4$
\begin{equation}\label{eq812b}
	b(pn)=(-1)\left(\frac{-20}{p}\right) b\left(\frac{n}{p}\right). 
\end{equation}  Replacing $n$ by $4n+3,$ we obtain
\begin{equation}\label{eq813b}
	b(4pn+3p)=(-1)\left(\frac{-20}{p}\right) b\left(\frac{4n+3}{p}\right). 
\end{equation}
Next replacing $n$ by $p^kn+r$ with $p \nmid r$ in \eqref{eq813b}, we obtain
\begin{equation}\label{eq814b}
	b\left(4\left(p^{k+1}n+pr+\frac{3p-1}{4}\right)+1 \right)=(-1)\left(\frac{-20}{p}\right) b\left(4\left(p^{k-1}n+\frac{4r+3-p}{4p}\right)+1 \right).
\end{equation}
Note that $\frac{3p-1}{4}$ and $\frac{4r+3-p}{4p}$ are integers. Using \eqref{eq814b} and \eqref{eq803b}, we get
\begin{equation}\label{eq815b}
	\bar{B}_{5,2^t}\left(4p^{k+1}n+p(4r+3) \right)\equiv (-1)\left(\frac{-20}{p}\right)  \bar{B}_{5,2^t}\left(4p^{k-1}n+ \frac{4r+3}{p} \right) \pmod4.
\end{equation}
For a prime $p \equiv 3 \pmod{4}$, we know that
\[
\left( \frac{-20}{p} \right)
=
\begin{cases}
	1 & \text{if } p \equiv 3, 7 \pmod{20}, \\
	-1 & \text{if } p \equiv 11, 19 \pmod{20}.
\end{cases}
\]
This completes the proof.
\end{proof}


   \begin{proof}[Proof of Corollary \ref{coro4.11}] Let $p$ be a prime such that $p \equiv 3 \pmod 4.$ Choose a non negative integer $r$ such that $4r+3=p^{2k-1}.$ Substituting $ k$ by $2k-1$ in \eqref{eq815b}, we obtain
   		\begin{align*}
   			\bar{B}_{5,2^t}\left(p^{2k}(4n+1) \right)&\equiv  (-1)\left(\frac{-20}{p}\right)  \bar{B}_{5,2^t}\left(p^{2k-2}(4n+1) \right)\\
   			& \equiv \cdots \equiv \left( (-1)\left(\frac{-20}{p}\right)\right)^k \bar{B}_{5,2^t}(4n+1) \pmod4.
   		\end{align*}
   		 Hence, the proof of Corollary \eqref{coro4.11} follows from the above congruence.
   	\end{proof}

   \begin{proof}[Proof of Theorem \ref{thm4.12}]
   	Recall the equation \eqref{eq4.6}
   	\begin{equation}\label{eq4.7a}
   		\sum_{n\geq0}\bar{B}_{5,2^{t}}(4n+1)q^n \equiv 2 f_1f_5 \pmod4. 
   	\end{equation}
   	Define
   	\begin{equation}\label{eq4.8}
   		\sum_{n\geq0}g(n)q^n:=f_1 f_5.
   	\end{equation}
   	Applying Newman result, we find that if $p$ is a prime with $p\equiv1\pmod{4}$, then
   	\begin{equation}\label{eq4.9}
   		g\left(pn+\frac{p-1}{4}\right)=g\left(\frac{p-1}{4}\right)g(n) - (-1)^{\frac{p-1}{2}}\left(\frac{5}{p}\right)g\left(\frac{n-\frac{p-1}{4}}{p}\right).
   	\end{equation}
   	Therefore, if  $p\nmid(4n+1)$, then $\frac{n-\frac{p-1}{4}}{p}$ is not an integer and this means
   	 \begin{equation}\label{eq4.10}
   		g\left(\frac{n-\frac{p-1}{4}}{p}\right)=0.
   	\end{equation}
    From \eqref{eq4.9} and \eqref{eq4.10}, we get that if $p\nmid(4n+1)$, then
   	\begin{equation}\label{eq4.11}
   		g\left(pn+\frac{p-1}{4}\right)=g\left(\frac{p-1}{4}\right)g(n). 
   	\end{equation} 
   	Hence, if $p\nmid(4n+1)$ and $g\left(\frac{p-1}{4}\right)\equiv0\pmod{4}$, then for $n\geq0$, 
   	\begin{equation}\label{eq4.12}
   		g\left(pn+\frac{p-1}{4}\right)\equiv0\pmod{4}.
   	\end{equation}
   	Replacing $n$ by $ pn+\frac{p-1}{4}$ in \eqref{eq4.9}, we get
   	\begin{equation}\label{eq4.13}
   		g\left(p^2n+\frac{p^2-1}{4}\right)=g\left(\frac{p-1}{4}\right)g\left(pn+\frac{p-1}{4}\right)- (-1)^{\frac{p-1}{2}}\left(\frac{5}{p}\right) g(n).
   	\end{equation}
   	From \eqref{eq4.13}, we see that if $g\left(\frac{p-1}{4}\right)\equiv0\pmod{4}$, then for $n\geq0$
   	\begin{equation}\label{eq4.14}
   		g\left(p^2n+\frac{p^2-1}{4}\right)\equiv - (-1)^{\frac{p-1}{2}}\left(\frac{5}{p}\right)g(n)\pmod{4}.
   	\end{equation}
   	Now, in view of \eqref{eq4.14} and mathematical induction, we find that if $g\left(\frac{p-1}{4}\right)\equiv0\pmod{4}$, then for $n\geq0$ and $k\geq0$,
   	\begin{equation}\label{eq4.15}
   		g\left(p^{2k}n+\frac{p^{2k}-1}{4}\right)\equiv \left( - (-1)^{\frac{p-1}{2}}\left(\frac{5}{p}\right)\right)^kg(n)\pmod{4}.
   	\end{equation}
   	Replacing $n$ by $ pn+\frac{p-1}{4}$ in \eqref{eq4.15} and using \eqref{eq4.12}, we deduce that if  $p\nmid(4n+1)$ and $g\left(\frac{p-1}{4}\right)\equiv0\pmod{4}$, then for $n\geq0$ and $k\geq0$,
   	\begin{equation}\label{eq4.16}
   		g\left(p^{2k+1}n+\frac{p^{2k+1}-1}{4}\right)\equiv0\pmod{4}.
   	\end{equation}
   	 From \eqref{eq4.7a} and \eqref{eq4.8}, we see that for $n\geq0$,
   	\begin{equation}\label{eq4.17}
   		\bar{B}_{5,2^{t}}(4n+1)\equiv2g(n)\pmod{4}.
   	\end{equation}
   	Hence, the proof of Theorem \eqref{thm4.12} follows from \eqref{eq4.17} and \eqref{eq4.16}.

   \end{proof}

	  \subsection{Congruences for ${ \bar B}_{4,3^{t}}(n)$}  In this subsection, we derive some new congruences for the counting sequence $\bar{B}_{4,3^{t}}(n)$, $\forall t\geq1$ and  $t\in\mathbb{N}$. By setting $(\ell_1,\ell_2)= (4,3^{t})$ in \eqref{def1}, we get an expression of the form 
	\begin{equation} \label{eq9.1}
		\sum_{n\geq0}\bar{B}_{4,3^{t}}(n)q^n=\frac{ f_2 f_4^2 f^2_{3^t}f_{8\cdot3^{t}}}{f_1^2 f_{8} f_{2\cdot3^{t}} f^2_{4\cdot3^t}}.
	\end{equation}
	 \begin{proof}[Proof of Theorem \ref{thm8.1}]
	 	
	 Employing \eqref{eq8.3} in \eqref{eq9.1} and extracting the terms of the form  $q^{3n+i} $ for $i=0,1,2$ and replacing $q^3$ by $q$, we obtain the followings:
	 	
	 \begin{equation} \label{eq9.2}
	 	\sum_{n\geq0}\bar{B}_{4,3^{t}}(3n)q^n=\frac{ f^4_2 f_3^6 f^2_{12} f^2_{3^{t-1}}f_{8\cdot3^{t-1}}}{f_1^8 f^3_{6} f_{24} f_{2\cdot3^{t-1}} f^2_{4\cdot3^{t-1}}}-8q^2\frac{f_2^2 f_4 f_6^3 f_{24}^2 f^2_{3^{t-1}} f_ {8\cdot3^{t-1}}}{f_1^6 f_8 f_{12} f_{2\cdot3^{t-1}}f^2_{4\cdot3^{t-1}} },
	 \end{equation}and
	 \begin{equation}\label{eq9.3}
	 	\sum_{n\geq0}\bar{B}_{4,3^{t}}(3n +1)q^n=2\frac{ f^3_2 f_3^3 f^2_{12} f^2_{3^{t-1}}f_{8\cdot3^{t-1}}}{f_1^7 f_{24} f_{2\cdot3^{t-1}} f^2_{4\cdot3^{t-1}}}-2q\frac{f_2^4 f_3^6 f_4  f_{24}^2 f^2_{3^{t-1}} f_ {8\cdot3^{t-1}}}{f_1^8 f_6^3 f_8 f_{12} f_{2\cdot3^{t-1}}f^2_{4\cdot3^{t-1}} },
	 \end{equation} and
	 \begin{equation}\label{eq9.4}
	 	\sum_{n\geq0}\bar{B}_{4,3^{t}}(3n +2)q^n=4\frac{ f^2_2 f_6^3 f^2_{12} f^2_{3^{t-1}}f_{8\cdot3^{t-1}}}{f_1^6 f_{24} f_{2\cdot3^{t-1}} f^2_{4\cdot3^{t-1}}}-4q\frac{f_2^3 f_3^3 f_4  f_{24}^2 f^2_{3^{t-1}} f_ {8\cdot3^{t-1}}}{f_1^7 f_8 f_{12} f_{2\cdot3^{t-1}}f^2_{4\cdot3^{t-1}} }.
	 \end{equation}
	 Hence, equation \eqref{subthm8.6} follows from \eqref{eq9.4}. Using equation \eqref{lem2.9}, the equation \eqref{eq9.2} can be rewritten as
	 \begin{equation}\label{eq9.5}
	 	\sum_{n\geq0}\bar{B}_{4,3^{t}}(3n)q^n\equiv1\pmod{2}.
	 \end{equation}
	 Therefore, we obtain
	 \begin{equation} \label{eq9.6}
	 	\bar{B}_{4,3^{t}}(3n) \equiv 0 \pmod{2} \qquad \forall n\geq 1.
	 \end{equation}
	Hence, the proof of equation \eqref{subthm8.2} is done from \eqref{eq9.6}. Using equation \eqref{lem2.9} in \eqref{eq9.3}, we find that
	 \begin{equation}\label{eq9.7}
	 	\sum_{n\geq0}\bar{B}_{4,3^{t}}(3n +1)q^n\equiv2\frac{f_3^3}{f_1} -2q\frac{f^3_{12}}{f_4}\pmod{4}.
	 \end{equation}
	 Employing \eqref{eq10f} in \eqref{eq9.7}, we get   
	 \begin{equation}\label{eq9.7a}
	 		\sum_{n\geq0}\bar{B}_{4,3^{t}}(3n +1)q^n\equiv2\frac{f_4^3 f_6^2}{f_2^2 f_{12}}\equiv2f_4^2\pmod{4}.
	 \end{equation}
	 Extracting odd and even powers of $q$ from both sides of equation \eqref{eq9.7a}, we get
	 \begin{equation}\label{eq9.8}
	 	\sum_{n\geq0}\bar{B}_{4,3^{t}}(6n +4)q^n\equiv0\pmod{4},
	 \end{equation}
	 and \begin{equation}\label{eq9.9}
	 	\sum_{n\geq0}\bar{B}_{4,3^{t}}(6n +1)q^n\equiv2 f_2^2\pmod{4}.
	 \end{equation}
	Again, extracting odd and even powers of $q$ from \eqref{eq9.9}, we find that
	\begin{equation}\label{eq9.10}
		\bar{B}_{4,3^{t}}(12n +7)\equiv0\pmod{4},
	\end{equation}
	and \begin{equation}\label{eq9.11}
			\sum_{n\geq0}\bar{B}_{4,3^{t}}(12n +1)q^n\equiv2 f_1^2\pmod{4}.
	\end{equation}
	Therefore, proof of equations \eqref{subthm8.3}, \eqref{subthm8.4} and \eqref{subthm8.5} follows from equations \eqref{eq9.8}, \eqref{eq9.10} and \eqref{eq9.11} respectively.
	\end{proof}
	\begin{proof}[Proof of Theorem \ref{thm4.8a}]
		From equation \eqref{eq9.11}, we have
		\begin{equation}\label{eq800ba}
			\sum_{n\geq0}\bar{B}_{4,3^{t}}(12n +1)q^n\equiv2 f_1^2\pmod{4}.
		\end{equation}
		Thus, we have
		\begin{equation}\label{eq801ba}
						\sum_{n\geq0}\bar{B}_{4,3^{t}}(12n +1)q^{12n+1}\equiv2 \eta^2(12z)\pmod{4}.
		\end{equation}
		By using Theorem \ref{thm2.3}, we obtain $\eta^2(12z) \in S_1(\Gamma_{0}(144), \left(\frac{12^2}{\bullet}\right)). $ Thus $\eta^2(12z) $ has a Fourier expansion, i.e.
		\begin{equation}\label{eq802ba}
		\eta^2(12z)= q-2q^{13}-q^{25}-\cdots= \sum_{n=1}^{\infty}  d(n) q^n .
		\end{equation}
		Therefore, $d(n)=0$ if $n \not \equiv 1 \pmod{12},$ for all $n \geq 0.$ From \eqref{eq801ba} and \eqref{eq802ba}, comparing the coefficients of $q^{12n+1}$, we get
		\begin{equation}\label{eq803ba}
			\bar B_{4,3^t}(12n+1) \equiv 2 d(12n+1) \pmod 4.
		\end{equation}
		Since $ \eta^2(12z)$ is a Hecke eigenform (see \cite{Martin1996}), it gives
		$$\eta^2(12z)|T_p= \sum_{n=1}^{\infty} \left(d(pn)+    \left(\frac{12^2}{p}\right) d\left(\frac{n}{p}\right)\right) q^n = \lambda(p)\sum_{n=1}^{\infty} d(n)q^n.$$ Observe that $\left(\frac{12^2}{p}\right)=1$. Comparing the coefficients of $q^n$ on both sides of the above equation, we get
		\begin{equation}\label{eq804ba}
			d(pn)+  d\left(\frac{n}{p}\right) = \lambda(p) d(n).
		\end{equation}
		Since $d(1)=1$ and $d(\frac{1}{p})=0,$ if we put $n=1$ in the above expression, we get $d(p)=\lambda(p).$ The fact that $d(p)=0$ for all $p \not \equiv 1 \pmod {12}$ immediately yields $\lambda(p)=0$ for all $p \not \equiv 1 \pmod {12}.$ From \eqref{eq804ba}, we get that for all $p \not \equiv 1 \pmod {12} $  
		\begin{equation}\label{eq805ba}
			d(pn)+  d\left(\frac{n}{p}\right) =0.
		\end{equation}
		Now, we consider two cases here. If $p \not| n,$ then 
		replacing $n$ by $pn+r$ with $\gcd(r,p)=1$ in \eqref{eq805ba}, we get
		\begin{equation}\label{eq806ba}
			d(p^2n+ pr)=0 .
		\end{equation}
		Substituting $n$ by $12n-pr+1$ in \eqref{eq806ba} and using \eqref{eq803ba}, we have
		\begin{equation}\label{eq807ba}
			\bar	B_{4,3^t}\left(12p^2n + p^2+ +pr(1-p^2)\right)\equiv 0 \pmod4.
		\end{equation}
		We now turn our attention to the second case, i.e., $p \mid n$. Substituting $pn$ for $n$ in \eqref{eq805ba}, gives
		\begin{equation}\label{eq808ba}
			d(p^2n) =  (-1) d\left(n\right).
		\end{equation}
		Similarly, substituting $n$ by $12n+1$ in \eqref{eq808ba} and using \eqref{eq803ba}, we get
		\begin{equation}\label{eq809ba}
			\bar B_{4,3^t}\left(12p^2n + p^2\right) \equiv(-1)\bar B_{4,3^t} \left(12n+1\right)\pmod{4}.
		\end{equation}
	Note that for any prime $p\geq5$, we have $12\mid(1-p^2)$ and $\gcd(\frac{1-p^2}{12},p) =1$. When r runs over a residue system excluding the multiples of $p$, so does $\frac{(1-p^2)r}{12}$. Thus for $p \not| j$, we can rewrite \eqref{eq807ba} as 
		\begin{equation}\label{eq810ba}
			\bar	B_{4,3^t}\left(12p^2n +p^2+12pj\right)\equiv 0 \pmod4.
		\end{equation}
		Let $p_i \geq 5$ be primes such that $p_i \not\equiv1 \pmod {12}$. Further note that
		$$ 12p_1^2 p_2^2 \cdots p_k^2 n +p_1^2 p_2^2 \cdots p_k^2 = 12p_1^2 \left(p_2^2 \cdots p_k^2 n +\frac{p_2^2\cdots p_k^2 -1}{12}\right) +p_1^2.$$
		 Using \eqref{eq809ba} repeatedly, we get
		\begin{equation}\label{eq811ba}
			\bar	B_{4,3^t} \left( 12p_1^2 p_2^2 \cdots p_k^2 n  +p_1^2 p_2^2 \cdots p_k^2  \right) 
			\equiv (-1)^k 	\bar B_{4,3^t}(12n+1)\pmod{4}.
		\end{equation}
		Let $j\not \equiv 0 \pmod{ p_{k+1}}.$ Thus \eqref{eq810ba} and \eqref{eq811ba} yield
		\begin{equation}
			\bar	B_{4,3^t} \left( 12p_1^2 p_2^2 \cdots p_{k+1}^2 n  + (12j+ p_{k+1})p_1^2 p_2^2 \cdots p_k^2 p_{k+1} \right) \equiv0\pmod{4}.
		\end{equation}
		This completes the proof.
	\end{proof}.
	 \begin{proof}[Proof of Theorem \ref{thm4.10a}]
	Let $v\in$$\{5,7,11\}$. From \eqref{eq805ba}, we get that for any prime $p \equiv v\pmod {12}$
		\begin{equation}\label{eq812ba}
			d(pn)=(-1) d\left(\frac{n}{p}\right). 
		\end{equation}  Replacing $n$ by $12n+v,$ we obtain
		\begin{equation}\label{eq813ba}
			d(12pn+vp)=(-1)  d\left(\frac{12n+v}{p}\right). 
		\end{equation}
		Next, replacing $n$ by $p^kn+r$ with $p \nmid r$ and $p\mid(12r+v)$ in \eqref{eq813ba}, we obtain
		\begin{equation}\label{eq814ba}
			d\left(12\left(p^{k+1}n+pr+\frac{vp-1}{12}\right)+1 \right)=(-1) d\left(12\left(p^{k-1}n+\frac{12r+v-p}{12p}\right)+1 \right).
		\end{equation}
		Note that $\frac{vp-1}{12}$ and $\frac{12r+v-p}{12p}$ are integers. Using \eqref{eq814ba} and \eqref{eq803ba}, we get
		\begin{equation}\label{eq815ba}
			\bar{B}_{4,3^t}\left(12p^{k+1}n+p(12r+v) \right)\equiv (-1)  \bar{B}_{4,3^t}\left(12p^{k-1}n+ \frac{12r+v}{p} \right) \pmod4.
		\end{equation}
	\end{proof}


	\begin{proof}[Proof of Corollary \ref{coro4.11a}] Let $v\in$$\{5,7,11\}$. Let $p$ be a prime such that $p \equiv v\pmod {12}$. Choose a non negative integer $r$ such that $12r+v=p^{2k-1}.$ Substituting $ k$ by $2k-1$ in \eqref{eq815ba}, we obtain
		\begin{align*}
			\bar{B}_{4,3^t}\left(p^{2k}(12n+1) \right)&\equiv  (-1) \bar{B}_{4,3^t}\left(p^{2k-2}(12n+1) \right)\\
			& \equiv \cdots \equiv  (-1)^k \bar{B}_{4,3^t}(12n+1) \pmod4.
		\end{align*}
		Hence, the proof of Corollary \eqref{coro4.11a} follows from the above congruence.
	\end{proof}
	
	 \subsection{Congruences for ${ \bar B}_{3,2^{t}}(n)$}  In this subsection, we derive some new congruences for the counting sequence $\bar{B}_{3,2^{t}}(n)$, $\forall t\geq2$ and  $t\in\mathbb{N}$. By setting $(\ell_1,\ell_2)= (3,2^{t})$ in \eqref{def1}, we obtain
	\begin{equation} \label{eq10.1}
		\sum_{n\geq0}\bar{B}_{3,2^{t}}(n)q^n=\frac{ f_2 f_3^2 f^2_{2^t}f_{3\cdot2^{t+1}}}{f_1^2 f_{6} f_{2^{t+1}} f^2_{3\cdot2^t}}.
	\end{equation}
	\begin{proof}[Proof of Theorem \ref{thm9.1}]
	Employing \eqref{eq10b} into \eqref{eq10.1}, we find that
	\begin{equation}\label{eq10.2}
		\sum_{n\geq0}\bar{B}_{3,2^{t}}(n)q^n=\frac{ f_4^4 f^2_{12} f^2_{2^t} f_{3\cdot2^{t+1}} }{f_2^4 f_8 f_{24} f_{2^{t+1}} f^2_{3\cdot2^t} } +2q\frac{f_4 f_6 f_8 f_{24} f^2_{2^t} f_{3\cdot2^{t+1}} }{f_2^3 f_{12} f_{2^{t+1}}f^2_{3\cdot2^t}}.
	\end{equation}
	Extracting terms of the form $q^{2n+i}$ where $i=0,1,$ from \eqref{eq10.2}, we can see that-
	\begin{equation}\label{eq10.3}
		\sum_{n\geq0}\bar{B}_{3,2^{t}}(2n)q^n=\frac{ f_2^4 f^2_{6} f^2_{2^{t-1}} f_{3\cdot2^{t}} }{f_1^4 f_4 f_{12} f_{2^{t}} f^2_{3\cdot2^{t-1}} },
	\end{equation} and
	\begin{equation}\label{eq10.4}
		\sum_{n\geq0}\bar{B}_{3,2^{t}}(2n+1)q^n=2\frac{f_2 f_3 f_4 f_{12} f^2_{2^{t-1}} f_{3\cdot2^{t}} }{f_1^3 f_{6} f_{2^{t}}f^2_{3\cdot2^{t-1}}}.
	\end{equation}
	Further, if we substitute equation \eqref{eq2} in \eqref{eq10.3} and extract  terms of the form $q^{2n+i}$ where $i=0,1$, we will get
\begin{equation}\label{eq10.5}
	\sum_{n\geq0}\bar{B}_{3,2^{t}}(4n)q^n=\frac{f_2^{13} f^2_3  f^2_{2^{t-2}} f_{3\cdot2^{t-1}} }{f_1^{10} f_4^4 f_{6} f_{2^{t-1}}f^2_{3\cdot2^{t-2}}},
\end{equation}
	\begin{equation}\label{eq10.6}
		\sum_{n\geq0}\bar{B}_{3,2^{t}}(4n+2)q^n=4\frac{f_2 f^2_3 f_4^4 f^2_{2^{t-2}} f_{3\cdot2^{t-1}} }{f_1^{6}  f_{6} f_{2^{t-1}}f^2_{3\cdot2^{t-2}}}.
	\end{equation}
	Using equation \eqref{lem2.9} in \eqref{eq10.6} with $p=2, k=1$, we will get 
	\begin{equation}\label{eq10.7}
		\sum_{n\geq0}\bar{B}_{3,2^{t}}(4n+2)q^n\equiv4f_4^3\pmod{8}.
	\end{equation}
	Again, if we extract  terms of the form $q^{4n+i}$ where $i=0,1,2,3$, we will get 
	\begin{equation}\label{eq10.8}
		\sum_{n\geq0}\bar{B}_{3,2^{t}}(16n+2)q^n\equiv4f_1^3\pmod{8},
	\end{equation} and
	\begin{equation}\label{eq10.9}
	 \bar{B}_{3,2^{t}}(16n+6)\equiv0\pmod{8},
	\end{equation} and
	 \begin{equation}\label{eq10.10}
	 \bar{B}_{3,2^{t}}(16n+10)\equiv0\pmod{8},
	 \end{equation} and
	\begin{equation}\label{eq10.11}
		\bar{B}_{3,2^{t}}(16n+14)\equiv0\pmod{8}.
	\end{equation}
		Hence, the proof of equations \eqref{eq9.1a}, \eqref{eq9.2a}, \eqref{eq9.3a} follows directly from \eqref{eq10.9}, \eqref{eq10.10} and \eqref{eq10.11} respectively.
		
  Further, equation \eqref{eq10.4} can be rewritten as 
	\begin{equation}\label{eq10.12}
		\sum_{n\geq0}\bar{B}_{3,2^{t}}(2n+1)q^n=2\frac{f_2 f_4 f_{12} f^2_{2^{t-1}} f_{3\cdot2^{t}} }{f_{6} f_{2^{t}}f^2_{3\cdot2^{t-1}}}\cdot\left(\frac{f_3}{f_1}\right)\cdot\left(\frac{1}{f_1^2}\right)
	\end{equation} 
	Using \eqref{eq3.2}, equation \eqref{eq10.12} takes the form
\begin{align}\label{eq10.13}
	\sum_{n\geq0}\bar{B}_{3,2^{t}}(2n+1)q^n
	&= 2\frac{f_4^2 f_8^4 f_{24}^2 f_{2^{t-1}}^2 f_{3\cdot2^{t}}}
	{f_2^6 f_{16} f_{48} f_{2^{t}} f_{3\cdot2^{t-1}}^2} + 4q \frac{f_4^4 f_{16}^3 f_{24}^2 f_{2^{t-1}}^2 f_{3\cdot2^{t}}}
	{f_2^6 f_8^2 f_{48} f_{2^{t}} f_{3\cdot2^{t-1}}^2} \notag \\
	&\quad + 2q \frac{f_4 f_8^7 f_{12} f_{48} f_{2^{t-1}}^2 f_{3\cdot2^{t}}}
	{f_2^6 f_{16}^3 f_{24} f_{2^{t}} f_{3\cdot2^{t-1}}^2}+ 4q^2 \frac{f_4^3 f_8 f_{12} f_{16} f_{48} f_{2^{t-1}}^2 f_{3\cdot2^{t}}}
	{f_2^6 f_{24} f_{2^{t}} f_{3\cdot2^{t-1}}^2}.
\end{align}
	Extracting even and odd powers of $q$ from both sides of the equation \eqref{eq10.13}, we obtain the followings:
	\begin{align}\label{eq10.14}
		\sum_{n\geq0}\bar{B}_{3,2^{t}}(4n+1)q^n
		&= 2\frac{f_2^2 f_4^4 f_{12}^2 f_{2^{t-2}}^2 f_{3\cdot2^{t-1}}}
		{f_1^6 f_{8} f_{24} f_{2^{t-1}} f_{3\cdot2^{t-2}}^2}+ 4q \frac{f_2^3 f_4 f_{6} f_{8} f_{24} f_{2^{t-2}}^2 f_{3\cdot2^{t-1}}}
		{f_1^6 f_{12} f_{2^{t-1}} f_{3\cdot2^{t-2}}^2},
	\end{align}
	\begin{align}\label{eq10.15}
		\sum_{n\geq0}\bar{B}_{3,2^{t}}(4n+3)q^n 
		&= 4\frac{f_2^4 f_{8}^3 f_{12}^2 f_{2^{t-2}}^2 f_{3\cdot2^{t-1}}}
		{f_1^6 f_4^2 f_{24} f_{2^{t-1}} f_{3\cdot2^{t-2}}^2}+ 2 \frac{f_2 f_4^7 f_{6} f_{24} f_{2^{t-2}}^2 f_{3\cdot2^{t-1}}}
		{f_1^6 f_{8}^3 f_{12} f_{2^{t-1}} f_{3\cdot2^{t-2}}^2}.
	\end{align}
	Further, using \eqref{lem2.9} with $p=2, k=1$, equations \eqref{eq10.14} and \eqref{eq10.15} can be written respectively as
	\begin{equation}\label{eq10.16}
		\sum_{n\geq0}\bar{B}_{3,2^{t}}(4n+1)q^n\equiv2 f^6_1 \pmod{4},
	\end{equation}
	\begin{equation}\label{eq10.17}
		\sum_{n\geq0}\bar{B}_{3,2^{t}}(4n+3)q^n\equiv2 f_6 f_{12} \pmod{4}.
	\end{equation}
	Now, using \eqref{lem2.9}, we can rewrite \eqref{eq10.16} as
	\begin{equation}\label{eq10.18}
		\sum_{n\geq0}\bar{B}_{3,2^{t}}(4n+1)q^n\equiv2 f_2 f_4 \pmod{4}.
	\end{equation}
	Extracting terms of the form $q^{2n+i}$ from \eqref{eq10.18}, for $i=0, 1$, we get
		\begin{equation}\label{eq10.19}
		\sum_{n\geq0}\bar{B}_{3,2^{t}}(8n+1)q^n\equiv2 f_1 f_2 \pmod{4},\\
	\end{equation}
	\begin{equation}\label{eq10.20}
		\bar{B}_{3,2^{t}}(8n+5)\equiv0 \pmod{4}.
	\end{equation}
	Thus, \eqref{eq9.5a} follows from \eqref{eq10.20}.
	
	Analogously, extracting terms of the form $q^{6n+i}$ for $i=0,1,2,3,4,5$ from \eqref{eq10.17}, we find that 
	\begin{equation}\label{eq10.21}
		\sum_{n\geq0}\bar{B}_{3,2^{t}}(24n+3)q^n\equiv2 f_1 f_2\pmod{4},
	\end{equation}
	\begin{equation}\label{eq10.22}
		\bar{B}_{3,2^{t}}(4\cdot(6n+i)+3)\equiv0 \pmod{4}, \quad \forall i=1,2,3,4,5. 
	\end{equation}
	Hence, \eqref{eq9.8a} is proved from \eqref{eq10.22}. Equation \eqref{eq9.9a} follows from \eqref{eq10.19} and \eqref{eq10.21}.
\end{proof}
\begin{proof}[Proof of Theorem \ref{thm4.8.3a}]
	From equation \eqref{eq10.16}, we have
	\begin{equation}\label{eq800b3}
		\sum_{n\geq0}\bar{B}_{3,2^t}(4n +1)q^n\equiv2 f^6_1 \pmod{4}.
	\end{equation}
	Thus, we have
	\begin{equation}\label{eq801b3}
		\sum_{n=0}^{\infty} \bar{B}_{3,2^t}(4n+1)q^{4n+1} \equiv 2 \eta^6(4z) \pmod4 .
	\end{equation}
	By using Theorem \ref{thm2.3}, we obtain $\eta^6(4z) \in S_3(\Gamma_{0}(16), \left(\frac{-24}{\bullet}\right)). $ Thus $\eta^6(4z) $ has a Fourier expansion i.e.
	\begin{equation}\label{eq802b3}
		\eta^6(4z)= q-6q^5+9q^{9}+10q^{13}-\cdots= \sum_{n=1}^{\infty}  e(n) q^n .
	\end{equation}
	Therefore, $ e(n)=0$ if $n \not \equiv 1 \pmod 4,$ for all $n \geq 0.$ From \eqref{eq801b3} and \eqref{eq802b3}, comparing the coefficients of $q^{4n+1}$, we get
	\begin{equation}\label{eq803b3}
		\bar B_{3,2^t}(4n+1) \equiv 2 e(4n+1) \pmod 4.
	\end{equation}
	Since $ \eta^6(4z)$ is a Hecke eigenform (see \cite{Martin1996}), it gives
	$$\eta^6(4z)|T_p= \sum_{n=1}^{\infty} \left(e(pn)+ p^2\cdot   \left(\frac{-24}{p}\right) e\left(\frac{n}{p}\right)\right) q^n = \lambda(p)\sum_{n=1}^{\infty} e(n)q^n.$$  Comparing the coefficients of $q^n$ on both sides of the above equation, we get
	\begin{equation}\label{eq804b3}
		e(pn)+ p^2\cdot   \left(\frac{-24}{p}\right) e\left(\frac{n}{p}\right) = \lambda(p) e(n).
	\end{equation}
	Since $e(1)=1$ and $e(\frac{1}{p})=0,$ if we put $n=1$ in the above expression, we get $e(p)=\lambda(p).$ As $e(p)=0$ for all $p \not \equiv 1 \pmod 4$ this implies that $\lambda(p)=0$ for all $p \not \equiv 1 \pmod 4.$ From \eqref{eq804b3}, we get that for all $p \not \equiv 1 \pmod 4 $  
	\begin{equation}\label{eq805b3}
		e(pn)+ p^2\cdot   \left(\frac{-24}{p}\right) e\left(\frac{n}{p}\right) =0.
	\end{equation}
	Now, we consider two cases here. If $p \not| n,$ then 
	replacing $n$ by $pn+r$ with $\gcd(r,p)=1$ in \eqref{eq805b3}, we get
	\begin{equation}\label{eq806b3}
		e(p^2n+ pr)=0 .
	\end{equation}
	Substituting $n$ by $4n-pr+1$ in \eqref{eq806b3} and using \eqref{eq803b3}, we have
	\begin{equation}\label{eq807b3}
		\bar	B_{3,2^t}\left(4p^2n +p^2 +pr(1-p^2)\right) \equiv 0 \pmod4.
	\end{equation}
	Now, we consider the second case, when $p\mid n.$ Here replacing $n$ by $pn$ in \eqref{eq805b3}, we get
	\begin{equation}\label{eq808b3}
		e(p^2n) =  ( -p^2)\cdot   \left(\frac{-24}{p}\right) e\left(n\right).
	\end{equation}
	Similarly, substituting $n$ by $4n+1$ in \eqref{eq808b3} and using \eqref{eq803b3}, we get
	\begin{equation}\label{eq809b3}
		\bar B_{3,2^t}\left(4p^2n + p^2\right) \equiv( -p^2)\cdot   \left(\frac{-24}{p}\right)\bar B_{3,2^t} \left(4n+1\right)\pmod{4}.
	\end{equation}
	Since $\gcd(\frac{1-p^2}{4},p) =1$, when r runs over a residue system excluding the multiples of $p$, so does $\frac{(1-p^2)r}{4}$. Thus for $p \not| j$, we can rewrite \eqref{eq807b3} as 
	\begin{equation}\label{eq810b3}
		\bar	B_{3,2^t}\left(4p^2n +p^2 +4pj\right) \equiv 0 \pmod4.
	\end{equation}
	Let $p_i \geq 5$ be primes such that $p_i \not\equiv1 \pmod 4$. Further note that
	$$ 4p_1^2 p_2^2 \cdots p_k^2 n +p_1^2 p_2^2 \cdots p_k^2 = 4p_1^2 \left(p_2^2 \cdots p_k^2 n +\frac{p_2^2\cdots p_k^2 -1}{4}\right) +p_1^2.$$
	Repeatedly using \eqref{eq809b3}, we get
	\begin{equation}\label{eq811b3}
		\begin{aligned}
			\bar{B}_{3,2^t}
			\left(
			4p_1^2 p_2^2 \cdots p_k^2 n
			+
			p_1^2 p_2^2 \cdots p_k^2
			\right)
			&=
			\bar{B}_{3,2^t}
			\Biggl(
			4p_1^2
			\Bigl(
			p_2^2 \cdots p_k^2 n
			+
			\frac{p_2^2 \cdots p_k^2 -1}{4}
			\Bigr)
			+
			p_1^2
			\Biggr)
			\\[1ex]
			&\equiv
			(-p^2_1)\cdot
			\left(\frac{-24}{p_1}\right)
			\bar{B}_{3,2^t}
			\left(
			4 p_2^2 \cdots p_k^2 n
			+
			p_2^2 \cdots p_k^2
			\right)
			\\[1ex]
			&\equiv \cdots
			\\[1ex]
			&\equiv
			\prod_{i=1}^{k} (-p_i^2)^k\cdot
			\left(\frac{-24}{p_i}\right)
			\,\bar{B}_{3,2^t}(4n+1)
			\pmod{4}.
		\end{aligned}
	\end{equation}
	Let $j\not \equiv 0 \pmod{ p_{k+1}}.$ Thus \eqref{eq810b3} and \eqref{eq811b3} yield
	\begin{equation}
		\bar	B_{3,2^t} \left( 4p_1^2 p_2^2 \cdots p_{k+1}^2 n  + (4j+ p_{k+1})p_1^2 p_2^2 \cdots p_k^2 p_{k+1} \right) \equiv0\pmod{4}.
	\end{equation}
	This proves our claim.
	
\end{proof}

\begin{proof}[Proof of Theorem \ref{thm4.10.3a}]
	From \eqref{eq805b3}, we get that for any prime $p \equiv 3\pmod 4$
	\begin{equation}\label{eq812b3}
		e(pn)= (-p^2)\cdot   \left(\frac{-24}{p}\right) e\left(\frac{n}{p}\right) 
	\end{equation}  Replacing $n$ by $4n+3,$ we obtain
	\begin{equation}\label{eq813b3}
		e(4pn+3p)=(-p^2)\cdot\left(\frac{-24}{p}\right) e\left(\frac{4n+3}{p}\right). 
	\end{equation}
	Next replacing $n$ by $p^kn+r$ with $p \nmid r$ in \eqref{eq813b3}, we obtain
	\begin{equation}\label{eq814b3}
		e\left(4\left(p^{k+1}n+pr+\frac{3p-1}{4}\right)+1 \right)=(-p^2)\cdot\left(\frac{-24}{p}\right) e\left(4\left(p^{k-1}n+\frac{4r+3-p}{4p}\right)+1 \right).
	\end{equation}
	Note that $\frac{3p-1}{4}$ and $\frac{4r+3-p}{4p}$ are integers. Using \eqref{eq814b3} and \eqref{eq803b3}, we get
	\begin{equation}\label{eq815b3}
		\bar{B}_{3,2^t}\left(4p^{k+1}n+p(4r+3) \right)\equiv (-p^2)\cdot\left(\frac{-24}{p}\right)  \bar{B}_{3,2^t}\left(4p^{k-1}n+ \frac{4r+3}{p} \right) \pmod4.
	\end{equation}
\end{proof}


\begin{proof}[Proof of Corollary \ref{coro4.11.3a}] Let $p$ be a prime such that $p \equiv 3 \pmod 4.$ Choose a non negative integer $r$ such that $4r+3=p^{2k-1}.$ Substituting $ k$ by $2k-1$ in \eqref{eq815b3}, we obtain
	\begin{align*}
		\bar{B}_{3,2^t}\left(p^{2k}(4n+1) \right)&\equiv  (-p^2)\cdot\left(\frac{-24}{p}\right)  \bar{B}_{3,2^t}\left(p^{2k-2}(4n+1) \right)\\
		& \equiv \cdots \equiv  (-p^2)^k\cdot\left(\frac{-24}{p}\right)^k \bar{B}_{3,2^t}(4n+1) \pmod4.
	\end{align*}
	Hence, Corollary \eqref{coro4.11.3a} is proved.
\end{proof} 
\begin{proof}[Proof of Theorem \ref{thm4.8.3b}]
	From equation \eqref{eq10.19}, we have
	\begin{equation}\label{eq800b.3}
		\sum_{n\geq0}\bar{B}_{3,2^{t}}(8n+1)q^n\equiv2 f_1 f_2\pmod{4}.
	\end{equation}
	Thus, we have
	\begin{equation}\label{eq801b.3}
		\sum_{n=0}^{\infty} \bar{B}_{3,2^t}(8n+1)q^{8n+1} \equiv 2 \eta(8z)\eta(16z) \pmod4 .
	\end{equation}
	By using Theorem \ref{thm2.3}, we obtain $\eta(8z)\eta(16z) \in S_1(\Gamma_{0}(128), \left(\frac{-128}{\bullet}\right)). $ Thus $\eta(8z)\eta(16z) $ has a Fourier expansion i.e.
	\begin{equation}\label{eq802b.3}
		\eta(8z)\eta(16z)= q-q^9-9q^{17}-\cdots= \sum_{n=1}^{\infty}  e_1(n) q^n .
	\end{equation}
	Therefore, $ e_1(n)=0$ if $n \not \equiv 1 \pmod 8,$ for all $n \geq 0.$ From \eqref{eq801b.3} and \eqref{eq802b.3}, comparing the coefficients of $q^{8n+1}$, we get
	\begin{equation}\label{eq803b.3}
		\bar{B}_{3,2^t}(8n+1) \equiv 2 e_1(8n+1) \pmod 4.
	\end{equation}
	Since $\eta(8z)\eta(16z)$ is a Hecke eigenform (see \cite{Martin1996}), it gives
	$$\eta(8z)\eta(16z)|T_p= \sum_{n=1}^{\infty} \left(e_1(pn)+   \left(\frac{-128}{p}\right) e_1\left(\frac{n}{p}\right)\right) q^n = \lambda(p)\sum_{n=1}^{\infty} e_1(n)q^n.$$ Note that $\left(\frac{-128}{p}\right)=\left(\frac{-2}{p}\right)$. Comparing the coefficients of $q^n$ on both sides of the above equation, we get
	\begin{equation}\label{eq804b.3}
		e_1
		(pn)+ \left(\frac{-2}{p}\right) e_1\left(\frac{n}{p}\right) = \lambda(p) e_1(n).
	\end{equation}
	Since $e_1(1)=1$ and $e_1(\frac{1}{p})=0,$ if we put $n=1$ in the above expression, we get $e_1(p)=\lambda(p).$ As $e_1(p)=0$ for all $p \not \equiv 1 \pmod 8$ this implies that $\lambda(p)=0$ for all $p \not \equiv 1 \pmod 8.$ Also, from \eqref{eq804b.3}, we get that for all $p \not \equiv 1 \pmod 8 $  
	\begin{equation}\label{eq805b.3}
		e_1(pn)+  \left(\frac{-2}{p}\right) e_1\left(\frac{n}{p}\right) =0.
	\end{equation}
	Now, we consider two cases here. If $p \not| n,$ then 
	replacing $n$ by $pn+r$ with $\gcd(r,p)=1$ in \eqref{eq805b.3}, we get
	\begin{equation}\label{eq806b.3}
		e_1(p^2n+ pr)=0 .
	\end{equation}
	Substituting $n$ by $8n-pr+1$ in \eqref{eq806b.3} and using \eqref{eq803b.3}, we have
	\begin{equation}\label{eq807b.3}
		\bar	B_{3,2^t}\left(8p^2n +p^2 +pr(1-p^2)\right) \equiv 0 \pmod4.
	\end{equation}
	Now, we consider the second case, when $p\mid n.$ Here replacing $n$ by $pn$ in \eqref{eq805b.3}, we get
	\begin{equation}\label{eq808b.3}
		e_1(p^2n) =  - \left(\frac{-2}{p}\right) e_1\left(n\right).
	\end{equation}
	Similarly, substituting $n$ by $8n+1$ in \eqref{eq808b.3} and using \eqref{eq803b.3}, we get
	\begin{equation}\label{eq809b.3}
		\bar B_{3,2^t}\left(8p^2n + p^2\right) \equiv -\left(\frac{-2}{p}\right)\bar B_{3,2^t} \left(8n+1\right)\pmod{4}.
	\end{equation}
	Since $\gcd(\frac{1-p^2}{8},p) =1$, when r runs over a residue system excluding the multiples of $p$, so does $\frac{(1-p^2)r}{8}$. Thus for $p \not| j$, we can rewrite \eqref{eq807b.3} as 
	\begin{equation}\label{eq810b.3}
		\bar	B_{3,2^t}\left(8p^2n +p^2 +8pj\right) \equiv 0 \pmod4.
	\end{equation}
	Let $p_i$ be primes such that $p_i \not\equiv1 \pmod 8$. Further note that
	$$ 8p_1^2 p_2^2 \cdots p_k^2 n +p_1^2 p_2^2 \cdots p_k^2 = 8p_1^2 \left(p_2^2 \cdots p_k^2 n +\frac{p_2^2\cdots p_k^2 -1}{8}\right) +p_1^2.$$
	Repeatedly using \eqref{eq809b.3}, we get
	\begin{equation}\label{eq811b.3}
		\begin{aligned}
			\bar{B}_{3,2^t}
			\left(
			8p_1^2 p_2^2 \cdots p_k^2 n
			+
			p_1^2 p_2^2 \cdots p_k^2
			\right)
			&=
			\bar{B}_{3,2^t}
			\Biggl(
			8p_1^2
			\Bigl(
			p_2^2 \cdots p_k^2 n
			+
			\frac{p_2^2 \cdots p_k^2 -1}{8}
			\Bigr)
			+
			p_1^2
			\Biggr)
			\\[1ex]
			&\equiv
			(-1)
			\left(\frac{-2}{p_1}\right)
			\bar{B}_{3,2^t}
			\left(
			8 p_2^2 \cdots p_k^2 n
			+
			p_2^2 \cdots p_k^2
			\right)
			\\[1ex]
			&\equiv \cdots
			\\[1ex]
			&\equiv
		(-1)^k	\prod_{i=1}^{k}
			\left(\frac{-2}{p_i}\right)
			\,\bar{B}_{3,2^t}(4n+1)
			\pmod{4}.
		\end{aligned}
	\end{equation}
	Let $j\not \equiv 0 \pmod{ p_{k+1}}.$ Thus \eqref{eq810b.3} and \eqref{eq811b.3} yield
	\begin{equation}
		\bar	B_{3,2^t} \left( 8p_1^2 p_2^2 \cdots p_{k+1}^2 n  + (8j+ p_{k+1})p_1^2 p_2^2 \cdots p_k^2 p_{k+1} \right) \equiv0\pmod{4}.
	\end{equation}
	This proves our claim.
	
\end{proof}

\begin{proof}[Proof of Theorem \ref{thm4.10.3bA}]
Let $v_1\in$$\{3,5,7\}$.	From \eqref{eq805b.3}, we get that for any prime $p \equiv v_1\pmod 8$
	\begin{equation}\label{eq812b.3}
		e_1(pn)=  - \left(\frac{-2}{p}\right) e_1\left(\frac{n}{p}\right) .
	\end{equation}  Replacing $n$ by $8n+v_1,$ we obtain
	\begin{equation}\label{eq813b.3}
		e_1(8pn+v_1p)=-\left(\frac{-2}{p}\right) e_1\left(\frac{8n+v_1}{p}\right). 
	\end{equation}
	Next replacing $n$ by $p^kn+r$ with $p \nmid r$ in \eqref{eq813b.3}, we obtain
	\begin{equation}\label{eq814b.3}
		e_1\left(8\left(p^{k+1}n+pr+\frac{v_1p-1}{8}\right)+1 \right)=-\left(\frac{-2}{p}\right) e_1\left(8\left(p^{k-1}n+\frac{8r+v_1-p}{8p}\right)+1 \right).
	\end{equation}
	Note that $\frac{v_1p-1}{8}$ and $\frac{8r+v_1-p}{8p}$ are integers. Using \eqref{eq814b.3} and \eqref{eq803b.3}, we get
	\begin{equation}\label{eq815b.3}
		\bar{B}_{3,2^t}\left(8p^{k+1}n+p(8r+v_1) \right)\equiv- \left(\frac{-2}{p}\right)  \bar{B}_{3,2^t}\left(8p^{k-1}n+ \frac{8r+v_1}{p} \right) \pmod4.
	\end{equation}
	This completes the proof.
\end{proof}


\begin{proof}[Proof of Corollary \ref{coro4.11.3bA}] Let $v_1\in$$\{3,5,7\}$. Let $p$ be a prime such that $p \equiv v_1 \pmod 8.$ Choose a non negative integer $r$ such that $8r+v_1=p^{2k-1}.$ Substituting $ k$ by $2k-1$ in \eqref{eq815b.3}, we obtain
	\begin{align*}
		\bar{B}_{3,2^t}\left(p^{2k}(8n+1) \right)&\equiv-  \left(\frac{-2}{p}\right)  \bar{B}_{3,2^t}\left(p^{2k-2}(8n+1) \right)\\
		& \equiv \cdots \equiv \left(-1\right)^k  \left(\frac{-2}{p}\right)^k \bar{B}_{3,2^t}(8n+1) \pmod4.
	\end{align*}
	Hence, the proof of Corollary \eqref{coro4.11.3bA} follows from the above congruence.
\end{proof} 
\begin{proof}[Proof of Theorem \ref{thm4.12A}]
	Recall the equation \eqref{eq10.19}
	\begin{equation}\label{eq4.7A}
		\sum_{n\geq0}\bar{B}_{3,2^{t}}(8n+1)q^n\equiv2 f_1 f_2 \pmod{4}.
	\end{equation}
	Define
	\begin{equation}\label{eq4.8A}
		\sum_{n\geq0}g_1(n)q^n:=f_1 f_2.
	\end{equation}
	Applying Newman result, we find that if $p$ is a prime with $p\equiv 1\pmod{8}$, then
	\begin{equation}\label{eq4.9A}
		g_1\left(pn+\frac{p-1}{8}\right)=g_1\left(\frac{p-1}{8}\right)g_1(n) - \left(\frac{2}{p}\right)g_1\left(\frac{n-\frac{p-1}{8}}{p}\right).
	\end{equation}
	Therefore, if  $p\nmid(8n+1)$, then $\frac{n-\frac{p-1}{8}}{p}$ is not an integer and this means
	\begin{equation}\label{eq4.10A}
		g_1\left(\frac{n-\frac{p-1}{8}}{p}\right)=0.
	\end{equation}
	From \eqref{eq4.9A} and \eqref{eq4.10A}, we get that if $p\nmid(8n+1)$, then
	\begin{equation}\label{eq4.11A}
		g_1\left(pn+\frac{p-1}{8}\right)=g_1\left(\frac{p-1}{8}\right)g_1(n). 
	\end{equation} 
	Hence, if $p\nmid(8n+1)$ and $g_1\left(\frac{p-1}{8}\right)\equiv0\pmod{4}$, then for $n\geq0$,
	\begin{equation}\label{eq4.12A}
		g_1\left(pn+\frac{p-1}{8}\right)\equiv0\pmod{4}.
	\end{equation}
	Replacing $n$ by $ pn+\frac{p-1}{8}$ in \eqref{eq4.9A}, we get
	\begin{equation}\label{eq4.13A}
		g_1\left(p^2n+\frac{p^2-1}{8}\right)=g_1\left(\frac{p-1}{8}\right)g_1\left(pn+\frac{p-1}{8}\right)- \left(\frac{2}{p}\right) g_1(n).
	\end{equation}
	From \eqref{eq4.13A}, we see that if $g_1\left(\frac{p-1}{8}\right)\equiv0\pmod{4}$, then for $n\geq0$,
	\begin{equation}\label{eq4.14A}
		g_1\left(p^2n+\frac{p^2-1}{8}\right)\equiv - \left(\frac{2}{p}\right)g_1(n)\pmod{4}.
	\end{equation}
	Now, in view of \eqref{eq4.14A} and mathematical induction, we find that if $g_1\left(\frac{p-1}{8}\right)\equiv0\pmod{4}$, then for $n\geq0$ and $k\geq0$,
	\begin{equation}\label{eq4.15A}
		g_1\left(p^{2k}n+\frac{p^{2k}-1}{8}\right)\equiv \left(- \left(\frac{2}{p}\right)\right)^kg_1(n)\pmod{4}.
	\end{equation}
	Replacing $n$ by $ pn+\frac{p-1}{8}$ in \eqref{eq4.15A} and using \eqref{eq4.12A}, we deduce that if  $p\nmid(8n+1)$ and $g_1\left(\frac{p-1}{8}\right)\equiv0\pmod{4}$, then for $n\geq0$ and $k\geq0$,
	\begin{equation}\label{eq4.16A}
		g_1\left(p^{2k+1}n+\frac{p^{2k+1}-1}{8}\right)\equiv0\pmod{4}.
	\end{equation}
	From \eqref{eq4.7A} and \eqref{eq4.8A}, we see that for $n\geq0$,
	\begin{equation}\label{eq4.17A}
		\bar{B}_{3,2^{t}}(8n+1)\equiv2g_1(n)\pmod{4}.
	\end{equation}
	Hence, the proof of Theorem \eqref{thm4.12A} follows from \eqref{eq4.17A} and \eqref{eq4.16}.

\end{proof}

	
	\noindent{\bf Data availability statement:} There is no data associated to our manuscript.
	


\begin{thebibliography}{99}
		
		\bibitem{Alanazi2024} A. M. Alanaji, A. O. Munagi and M. P. Saikia, \emph{Some properties of overpartitions into nonmultiples of two integers}, \emph{https://arxiv.org/abs/2412.18938, 2024.}
		
%
		
		 \bibitem{Barman2018} R. Barman and A. Singh, \emph{Congruences for $\ell$-regular overpartitions and Andrews' singular overpartitions }, Ramanujan J. \textbf{45} (2018), 497-515.
		
		
		\bibitem{B.C.Berndt1991} B.C. Berndt, Ramanujan's Notebooks, Part $III$ (Springer, New York, 1991).
		
		\bibitem{Lovejoy2004} S. Corteel and J. Lovejoy, \emph{Overpartitions, } Trans. Amer. Math. Soc., \textbf{356} (2004),1623-1635.
		
		
		
		
		
		
%
		
		
		\bibitem{Hirschhorn2017} M. D. Hirschhorn, \emph{The power of q}, volume 49 of \emph{Devolepments in Mathematics}, Springer, Cham, 2017.
		
		\bibitem{Hirschhorn2005} M. D. Hirschhorn and J. A. Sellers, \emph{Arithmetic relations for overpartitions}, J. Combin. Math. Combin. Comput.  \textbf{53} (2005), 65-73.
		
		
		
		\bibitem{Hirschhorn2010} M.D. Hirschhorn and J. A. Sellers, \emph{Elementary proofs of parity results for 5-regular partitions}, Bull. Aust. Math. Soc. \textbf{81} (2010), 58-63.
		
	
		
		\bibitem{Koblitz} N. Koblitz, \emph{Introduction to elliptic curves and modular forms}, Springer-Verlag New York (1991).
		
		\bibitem{MacMahon1960} P. A. MacMahon, \emph{Combinatory analysis}. Chelsea Publishing Co., New York, 1960. Two volumes (bound as one).
		
		\bibitem{Mahlburg2004} K. Mahlburg, \emph{The overpartition function modulo small powers of 2}, Discrete Math. \textbf{286} (2004), 263-267.
		
		\bibitem{Martin1996} Y. Martin, \emph{Multiplicative $\eta$-quotients}, Trans. Am. Math. Soc. \textbf{348} (1996), 4825-4856.
		
			\bibitem{MeherJindal2024} N. K. Meher and A. Jindal, \emph{Arithmetic density and congruences of $\ell$-regular bipartitions}, https://arxiv.org/pdf/2406.06224.
		
		\bibitem{Nadji2025} M.L. Nadji, M. Ahmia and J. L. Ram\'{i}rez, \emph{Arithmetic properties of biregular overpartitions}, Ramanujan J. \textbf{67} (2025), 373-396.
		
		\bibitem{Newman1959} M. Newman, \emph{Modular forms whose coefficients possess multiplicative properties}, Ann. of Math. \textbf{70} (1959), 478-489.
		
		\bibitem{Ono2004} K. Ono, \emph{The web of modularity: arithmetic of the coefficients of modular forms and $q-$series,} CBMS Regional Conference Series in Mathematics, $102,$ Amer. Math. Soc., Providence, RI, 2004.
		
		
		\bibitem{Paudel2025} B. Paudel, J. A. Sellers and H. Wang, \emph{Extending recent congruence results on ($\ell$,$\mu$)-regular overpartitions}, \emph{https://arxiv.org/abs/2505.21989, 2025.} 
		
		\bibitem{Ray2018} C. Ray and A. Singh, \emph{ Infinite families of Congruences for $k$-regular overpartitions }, Int. J. Number Theory \textbf{14} (2018), 19-29.
		
		
		
%
		
		
		\bibitem{Shen2016} E. Y. Y. Shen, \emph{Arithmetic properties of $\ell$-regular overpartitions}, Int. J. Number Theory, \textbf{12} (2016), 841-852.
		
		\bibitem{Sellers2020} R. d. Silva and J.A. Sellers, \emph{Infinitely many congruences for k-regular partitions with designated summands.} Bull. Braz. Math. Soc.(N.S) \textbf{51(2)}(2020), 357-370.
		
		
	   \bibitem{Anakha2025} A. V, \emph{On some new congruences for biregular overpartiotions,} \emph{https://arxiv.org/abs/2507.02720, 2025.}
	
		\bibitem{wang2017} L. Wang, \emph{Arithmatic properties of $(k,l)$-regular bipartitions}, Bull. Aust. Math. Soc. \textbf{95} (2017), 353-364.
		
%
		
		\bibitem{Yao2013} E. X. W. Xia and O. X. M. Yao, \emph{Analogues of Ramanujan's partition identities,} Ramanujan J. \textbf{31} (2013), 373-396.
		
	\end{thebibliography}
\end{document}